\theoremstyle{plain}
\newtheorem{thm}{Theorem}[section]
\newtheorem{lem}[thm]{Lemma}
\newtheorem{prop}[thm]{Proposition}
\newtheorem{cor}[thm]{Corollary}
\theoremstyle{definition}
\theoremstyle{remark}
\newcommand{\qbinom}[2]{\genfrac{[}{]}{0pt}{}{#1}{#2}_q}
\newcommand{\qbinomf}[2]{\genfrac{<}{>}{0pt}{}{#1}{#2}_q}
\newcommand{\F}{\mathbb{F}}
\newcommand{\Z}{\mathbb{Z}}
\newcommand{\Q}{\mathbb{Q}}
\newcommand{\N}{\mathbb{N}}
\newcommand{\C}{\mathbb{C}}
\title{Counting Quiver Representations over Finite Fields Via Graph Enumeration}
\author{Geir T. Helleloid \\ {\normalsize Department of Mathematics}
\\ {\normalsize The University of Texas at Austin} \\ 1 {\normalsize
University Station C1200} \\ {\normalsize Austin, TX 78712-0257} \\
{\normalsize \texttt{geir@math.utexas.edu}} \and Fernando
Rodriguez-Villegas \\ {\normalsize Department of Mathematics} \\
{\normalsize The University of Texas at Austin} \\ 1 {\normalsize
University Station C1200} \\ {\normalsize Austin, TX 78712-0257} \\
{\normalsize \texttt{villegas@math.utexas.edu}}} 
\begin{document}

\maketitle

\begin{abstract}
  Let $\Gamma$ be a quiver on $n$ vertices $v_1, v_2, \dots, v_n$ with
  $g_{ij}$ edges between $v_i$ and $v_j$, and let $\bm{\alpha} \in
  \N^n$.  Hua gave a formula for $A_{\Gamma}(\bm{\alpha}, q)$, the
  number of isomorphism classes of absolutely indecomposable
  representations of $\Gamma$ over the finite field $\F_q$ with
  dimension vector $\bm{\alpha}$.  Kac showed that
  $A_{\Gamma}(\bm{\alpha}, q)$ is a polynomial in $q$ with integer
  coefficients.  Using Hua's formula, we show that for each integer $s
  \ge 0$, the $s$-th derivative of $A_{\Gamma}(\bm{\alpha},q)$ with
  respect to $q$, when evaluated at $q = 1$, is a polynomial in the
  variables $g_{ij}$, and we compute the highest degree terms in this
  polynomial.  Our formulas for these coefficients depend on the
  enumeration of certain families of connected graphs.
\end{abstract}

\section{Introduction}
\label{introsec}

Let $\Gamma$ be a quiver on $n$ vertices $v_1, v_2, \dots, v_n$ with
$g_{ij}$ edges between vertices $v_i$ and $v_j$ for $1 \le i \le j \le
n$.  All of the following results are independent of the orientation
of these edges.  Let $\bm{\alpha} = (\alpha_1, \alpha_2, \dots,
\alpha_n) \in \N^n$ (throughout the paper, vectors will be represented
by boldface symbols).  We are interested in $A_{\Gamma}(\bm{\alpha},
q)$, the number of isomorphism classes of absolutely indecomposable
representations of $\Gamma$ over the finite field $\F_q$ with
dimension vector $\bm{\alpha}$.  Kac~\cite{kac} proved that
$A_{\Gamma}(\bm{\alpha}, q)$ is a polynomial in $q$ with integer
coefficients and that it is independent of the orientation of
$\Gamma$. He conjectured that the coefficients of
$A_{\Gamma}(\bm{\alpha}, q)$ are non-negative and that if $\Gamma$ has
no loops, then the constant term of $A_{\Gamma}(\bm{\alpha}, q)$ is
equal to the multiplicity of $\bm{\alpha}$ in the Kac-Moody algebra
defined by $\Gamma$.  Both conjectures are true for quivers of finite
and tame type (see Crawley-Boevey and Van den Bergh \cite{CBVB}); a
proof of the multiplicity statement in Kac's conjectures for general
quivers was recently announced by Hausel \cite{Hausel}.

Our goal is to understand $A_{\Gamma}(\bm{\alpha}, 1)$, and more
generally $\left. \left( \frac{d^s}{dq^s} \; A_{\Gamma}(\bm{\alpha},
    q) \right) \right|_{q=1}$, as a function of the variables
$g_{ij}$.  The main impetus for studying $A_{\Gamma}(\bm{\alpha}, 1)$
comes from the work of Hausel and Rodriguez-Villegas~\cite{HRV}. They
show that when $\Gamma$ is the quiver $S_g$ consisting of one vertex $v$ with $g$
self-loops, $A_{S_g}(\alpha, 1)$ (where $\bm{\alpha} = \alpha \in \Z$) is
(conjecturally) the dimension of the middle cohomology group of a
character variety parameterizing certain representations of the
fundamental group of a closed genus-$g$ Riemann surface to
${\rm GL}_n(\C)$.

One can imagine that specializing to $q = 1$ will relate $A_\Gamma(\bm{\alpha}, q)$ to
counting representations of $\Gamma$ in the category of finite sets;
this hope follows a well-known philosophy about the significance of
letting $q \to 1$ in formulas that depend on a finite field $\F_q$,
although it seems hard to make this philosophy precise.  In this paper
we show in Theorems~\ref{zeroderivativethm} and~\ref{leadingcoeffthm}
that $\left. \left( \frac{d^s}{dq^s} \; A_{\Gamma}(\bm{\alpha}, q)
  \right) \right|_{q=1}$ is a polynomial in the variables $g_{ij}$,
and we give a formula for its leading coefficients.  This formula
relies on the number of connected graphs in a family determined by
$\Gamma$ and on Stirling numbers of the second kind, which arise from
derivatives of $q$-binomial coefficients.  The description of the
graphs in question is given prior to Theorem~\ref{expformula} and all
necessary information about Stirling numbers and $q$-binomial
coefficients is given in Appendix~\ref{appendixsec}.  Unfortunately,
our proofs of Theorems~\ref{zeroderivativethm}
and~\ref{leadingcoeffthm} do not give any conceptual indication as to
why our results should involve the enumeration of connected graphs.

To illustrate the type of result found in this paper, consider
$\Gamma=S_g$.  Using a formula of Hua~\cite[Theorem 4.6]{hua} for
$A_{\Gamma}(\bm{\alpha}, q)$, which we will present in
Section~\ref{huasec} and which is our starting point for the results
in this paper, we can compute the polynomial $A_{S_g}(\alpha, q)$ for
small $\alpha$ and $g$.  These computations are displayed in the
following table:
\[
\begin{array}{|c|c|c|c|c|}
\hline
A_{S_g}(\alpha, q) & g=1 & g=2 & g=3 & g=4 \\
\hline
\rule[0mm]{0mm}{5mm} \alpha=1 & q & q^2 & q^3 & q^4 \\
\alpha=2 & q & q^5+q^3 & q^9+q^7+q^5 & q^{13}+q^{11}+q^9+q^7 \\
\alpha=3 & q & q^{10}+q^8+q^7+\cdots & q^{19}+q^{17}+q^{16}+\cdots &
q^{28}+q^{26}+q^{25}+\cdots \\
\alpha=4 & q & q^{17}+q^{15}+q^{14}+\cdots & q^{33}+q^{31}+q^{30}+\cdots &
q^{49}+q^{47}+q^{46}+\cdots \\  
\alpha=5 & q & q^{26}+q^{24}+q^{23}+\cdots & q^{51}+q^{49}+q^{48}+\cdots &
q^{76}+q^{74}+q^{73}+\cdots \\  
\alpha=6 & q & q^{37}+q^{35}+q^{34}+\cdots & q^{73}+q^{71}+q^{70}+\cdots &
q^{109}+q^{107}+q^{106}+\cdots \\  
\hline
\end{array}
\]
Evaluating each polynomial at $q = 1$ gives the following values for $A_{S_g}(\alpha, 1)$: 
\[
\begin{array}{|c|c|c|c|c|c|c|}
\hline
A_{S_g}(\alpha, 1) & g=1 & g=2 & g=3 & g=4 & g=5 & g=6 \\
\hline
\rule[0mm]{0mm}{5mm} \alpha=1 & 1 & 1 & 1 & 1 & 1 & 1 \\
\alpha=2 & 1 & 2 & 3 & 4 & 5 & 6 \\
\alpha=3 & 1 & 6 & 15 & 28 & 45 & 66 \\
\alpha=4 & 1 & 22 & 95 & 252 & 525 & 946 \\
\alpha=5 & 1 & 95 & 710 & 2674 & 7215 & 15961 \\
\alpha=6 & 1 & 449 & 5856 & 31374 & 109707 & 298023 \\
\hline
\end{array}
\]
Fitting each row of the above table to a polynomial gives empirical
evidence that the next table is correct: 
\[
\begin{array}{|c|l|}
\hline
& A_{S_g}(\alpha, 1) \\
\hline
\rule[0mm]{0mm}{5mm} \alpha=1 & 1 \\
\rule[0mm]{0mm}{5mm} \alpha=2 & \binom{g}{1} \\
\rule[0mm]{0mm}{5mm} \alpha=3 & 4 \binom{g}{2} + \binom{g}{1} \\
\rule[0mm]{0mm}{5mm} \alpha=4 & 32 \binom{g}{3} + 20 \binom{g}{2} + \binom{g}{1}\\
\rule[0mm]{0mm}{5mm} \alpha=5 & 400 \binom{g}{4} + 428 \binom{g}{3} + 93 \binom{g}{2} + \binom{g}{1} \\
\rule[-2mm]{0mm}{7mm} \alpha=6 & 6912\binom{g}{5} + 10640 \binom{g}{4} +4512 \binom{g}{3} + 447 \binom{g}{2} + \binom{g}{1} \\
\hline
\end{array}
\]
This suggests that $A_{S_g}(\alpha, 1)$ is a polynomial in $g$ of
degree $\alpha-1$ with leading coefficient $2^{\alpha-1}
\alpha^{\alpha-2} / \alpha!$.  We prove this and a generalization to all quivers in
Theorem~\ref{zeroderivativethm} below.  Theorem~\ref{leadingcoeffthm}
offers a similar result for any derivative (with respect to $q$) of
$A_{\Gamma}(\bm{\alpha}, q)$ evaluated at $q = 1$.  

The fact that the leading coefficient of $A_{S_g}(\alpha, 1)$ equals
$2^{\alpha-1} \alpha^{\alpha-2} / \alpha!$ was mentioned (without
proof) in \cite[Remark 4.4.6]{HRV}.  As mentioned above, in the
context of that paper, $S_g$ corresponds to a closed Riemann
surface of genus $g$ and it seems more appropriate to use its Euler
characteristic $2g-2$ instead of $g$ as a variable.  One possibly
telling feature of this choice is that the factor $2^{\alpha-1}$ in
the leading coefficient disappears, though we do not know of a similar
approach for the general case.  Finally, we note that
$\alpha^{\alpha-2}$ appears in the formula for the leading coefficient
of $A_{S_g}(\alpha, 1)$ because $\alpha^{\alpha-2}$ is the number of
trees on $\alpha$ labeled vertices by Cayley's Theorem.  As indicated
above, for other quivers, the leading coefficient formula involves the
enumeration of other families of graphs.

\paragraph{Acknowledgements.} FRV was supported by NSF grant
DMS-0200605. We would like to thank Keith Conrad for his proof of
Theorem~\ref{expthm}.

\section{Hua's Formula}
\label{huasec}

We begin with a presentation of Hua's formula
for $A_{\Gamma}(\bm{\alpha}, q)$.  Let $\bm{T} = (T_1, T_2, \dots,
T_n)$ be a vector of indeterminates.  Let $\mathcal{P}$ denote the set
of all integer partitions, including the unique partition of 0.  If
$\lambda$ and $\mu$ are partitions with transposes $\lambda'$ and
$\mu'$ respectively, let 
\[
\left< \lambda, \mu \right> := \sum_{1 \le i}{\lambda_i' \mu_i'}.
\]
Also, let
\[
b_{\lambda}(q) := \prod_{1 \le i}{\prod_{1 \le j \le n_i}{(1-q^j)}},
\]
where $\lambda$ has $n_i$ parts of size $i$ for each $i$.  As a
notational convenience, we will write monomials as a vector with a
vector exponent, as in $\bm{T}^{\bm{\alpha}} = T_1^{\alpha_1} \cdots
T_n^{\alpha_n}$.  If $\lambda$ is a partition or a
composition, let $|\lambda|$ denote the sum of the parts of $\lambda$.

Finally, define the function $P_{\Gamma}(\bm{T}, q)$ by
\begin{eqnarray}
\label{pdef}
P_{\Gamma}(\bm{T}, q) := \sum_{\lambda^1, \dots, \lambda^n \in
\mathcal{P}}{ \frac{\prod_{1 \le i 
\le j \le n}{q^{g_{ij} \left< \lambda^i, \lambda^j \right>}}}{\prod_{1
\le i \le 
n}{q^{\left< \lambda^i, \lambda^i \right>} b_{\lambda^i}(q^{-1})}} \;
T_1^{\left|\lambda^1\right|} \cdots 
T_n^{\left|\lambda^n\right|}}
\end{eqnarray}
and the function $H_{\Gamma}(\bm{\alpha}, q)$ implicitly by
\begin{eqnarray}
\label{hdef}
\log{P_{\Gamma}(\bm{T}, q)} = \sum_{\bm{0} \neq \bm{\alpha} \in
\N^n}{\frac{H_{\Gamma}(\bm{\alpha}, q)}{\overline{\bm{\alpha}}} \; 
\bm{T}^{\bm{\alpha}}},
\end{eqnarray}
where $\overline{\bm{\alpha}} = \gcd(\alpha_1, \dots, \alpha_n)$.
Hua expresses $A_{\Gamma}(\bm{\alpha}, q)$ in terms of $H_{\Gamma}(\bm{\alpha},q)$.  
\begin{thm}[{Hua~\cite[Theorem 4.6]{hua}}]
\label{huathm}
 \begin{eqnarray}
  \label{aformula}
  A_{\Gamma}(\bm{\alpha}, q) = \frac{q-1}{\overline{\bm{\alpha}}} \sum_{d |
\overline{\bm{\alpha}}}{\mu(d) H_{\Gamma}(\bm{\alpha}/d, q^d)}.
 \end{eqnarray}
\end{thm}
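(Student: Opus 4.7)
My plan is to interpret $P_\Gamma(\bm{T}, q)$ as a groupoid-cardinality generating function for pairs $(V,\phi)$ consisting of an $\F_q$-representation $V$ of $\Gamma$ together with a nilpotent endomorphism $\phi$, factor it using Krull--Schmidt so that $\log P_\Gamma$ is supported on indecomposables, and then relate indecomposable to absolutely indecomposable representations by Galois descent and Möbius inversion.

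First, I would rewrite the summand of $P_\Gamma$ indexed by $(\lambda^1,\dots,\lambda^n)$ using the identities $|\mathrm{Aut}_{\F_q[t]}(V_\lambda)| = q^{\left<\lambda,\lambda\right>} b_\lambda(q^{-1})$ and $|\mathrm{Hom}_{\F_q[t]}(V_\lambda, V_\mu)| = q^{\left<\lambda,\mu\right>}$, where $V_\lambda$ denotes the nilpotent $\F_q[t]$-module of Jordan type $\lambda$. This should give
\[
P_\Gamma(\bm{T}, q) \;=\; \sum_{(V,\phi)} \frac{\bm{T}^{\dim V}}{|\mathrm{Aut}(V,\phi)|},
\]
with the sum over isomorphism classes of pairs: the $\lambda^i$ record the Jordan type of $\phi|_{V_i}$, the denominator $\prod_i q^{\left<\lambda^i,\lambda^i\right>} b_{\lambda^i}(q^{-1})$ is the order of $\mathrm{Aut}(V,\phi)$ as a product over vertices, and the numerator $\prod_{i\le j} q^{g_{ij}\left<\lambda^i,\lambda^j\right>}$ counts the $g_{ij}$-tuples of $\F_q[t]$-linear maps $V_{\lambda^i} \to V_{\lambda^j}$, which are exactly the edge maps that promote $\phi$ to an endomorphism of the $\Gamma$-representation.

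Second, the category of such pairs $(V,\phi)$ is Krull--Schmidt, so this sum is the exponential of the corresponding sum restricted to indecomposable $(V,\phi)$. A further summation over all Jordan types of $\phi$ on a fixed indecomposable $\Gamma$-representation $V$ should collapse to an explicit formal series, and matching the outcome against (\ref{hdef}) would identify $H_\Gamma(\bm{\alpha}, q)$ as a specific multiple of $I_\Gamma(\bm{\alpha}, q)$, the number of isomorphism classes of indecomposable $\Gamma$-representations of dimension $\bm{\alpha}$ over $\F_q$.

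Third, the Lang--Steinberg theorem furnishes a bijection between indecomposable $\F_q$-representations of dimension $\bm{\alpha}$ and Frobenius-orbits of absolutely indecomposable $\overline{\F_q}$-representations, with orbit length dividing $\overline{\bm{\alpha}}$. Counting orbits by length yields a linear relation expressing $H_\Gamma(\bm{\alpha}, q)$ in terms of the values $A_\Gamma(\bm{\alpha}/d, q^d)$ for $d \mid \overline{\bm{\alpha}}$, and standard Möbius inversion on the divisor lattice then produces (\ref{aformula}). The main obstacle is the second step: tracking the $\log/\exp$ combinatorics in the presence of the extra nilpotent data $\phi$ carefully enough that the normalization $1/\overline{\bm{\alpha}}$ in (\ref{hdef}) and the prefactor $(q-1)$ in (\ref{aformula}) emerge correctly. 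This is the technical core of Hua's Hall-algebra argument \cite{hua}, which my plan would follow.
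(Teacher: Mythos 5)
This statement is quoted from Hua's paper and is not proved in the present paper, so there is no internal proof to compare your attempt against; I can only assess your outline on its own terms. Your overall strategy --- interpret $P_\Gamma$ as a groupoid count of representations equipped with a nilpotent endomorphism, pass to indecomposables via Krull--Schmidt, then relate indecomposables to absolutely indecomposables by Galois descent and M\"obius inversion --- is consistent with how Hua's Theorem 4.6 is actually established, and your first step is sound: the identities $|\mathrm{Hom}_{\F_q[t]}(V_\lambda,V_\mu)|=q^{\langle\lambda,\mu\rangle}$ and $|\mathrm{Aut}_{\F_q[t]}(V_\lambda)|=q^{\langle\lambda,\lambda\rangle}b_\lambda(q^{-1})$ do convert the summand of \eqref{pdef} into $\bm{T}^{\dim V}/|\mathrm{Aut}(V,\phi)|$ summed over isomorphism classes of pairs.

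The gap is in your second step, which you yourself flag as the technical core but do not carry out, and which as stated is organized incorrectly. The category of pairs $(V,\phi)$ is Krull--Schmidt, but its indecomposable objects are not pairs consisting of an indecomposable $V$ with a chosen nilpotent $\phi$: an indecomposable pair can have decomposable underlying representation, so ``summing over all Jordan types of $\phi$ on a fixed indecomposable $V$'' does not parametrize the support of $\log P_\Gamma$. Moreover, because the automorphism group of a direct sum of isomorphic indecomposables involves a general linear group over the residue division ring of the endomorphism ring, the ordinary exponential formula does not apply to groupoid cardinalities; one needs the plethystic (Adams-operation) version, and it is exactly this structure --- in Hua's argument, the factorization over monic irreducible polynomials in $\F_q[t]$ together with the count of such polynomials of each degree --- that produces the prefactor $q-1$, the substitution $q\mapsto q^d$, and the M\"obius function in \eqref{aformula}. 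Until that computation is done, the normalization $\frac{q-1}{\overline{\bm{\alpha}}}\sum_{d\mid\overline{\bm{\alpha}}}\mu(d)$ is asserted rather than derived, so what you have is a reasonable plan of attack, not a proof.
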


\section{A Deformation of $A_{\Gamma}(\bm{\alpha}, q)$}
\label{graphsec}

Although we want to understand $A_{\Gamma}(\bm{\alpha}, 1)$, we cannot
use Equations~\eqref{pdef},~\eqref{hdef}, and~\eqref{aformula} directly,
since the summands in $P_{\Gamma}(\bm{T}, q)$ have poles at $q = 1$.
We will proceed instead by introducing extra variables, computing
certain limits as $q$ approaches 1, and then specializing the results.
This section analyzes $A_{\Gamma}(\bm{\alpha}, \bm{u}, q)$, a
generalization of $A_{\Gamma}(\bm{\alpha}, q)$, while
Sections~\ref{mahlersec} and~\ref{derivativesec} apply the results to
$A_{\Gamma}(\bm{\alpha}, q)$. 

In what follows, vectors $\bm{u} \in \N^{n(n+1)/2}$ will have
components $u_{ij}$ for $1 \le i \le j \le n$, and for  $\bm{\ell} \in
\N^n$ we let  $\bm{u}^{\bm{\ell}}:=\prod_{1 \le i \le j \le
  n}u_{ij}^{\ell_i \ell_j}$.  
 Let $\bm{u} \in \N^{n(n+1)/2}$. Define functions $P_{\Gamma}(\bm{T},
 \bm{u}, q)$, 
$H_{\Gamma}(\bm{\alpha}, \bm{u}, q)$, and $A_{\Gamma}(\bm{\alpha},
\bm{u}, q)$ by the formulas 
\begin{eqnarray}
 P_{\Gamma}(\bm{T}, \bm{u}, q) &:=& \sum_{\lambda^1, \dots, \lambda^n
 \in \mathcal{P}}{ \frac{\prod_{1 
\le i \le j \le n}{u_{ij}^{\left< \lambda^i, \lambda^j
\right>}}}{\prod_{1 \le i \le 
n}{q^{\left< \lambda^i, \lambda^i \right>} b_{\lambda^i}(q^{-1})}} \;
T_1^{|\lambda^1|} \cdots 
T_n^{|\lambda^n|}}, \label{pgendef} \\ 
\log{P_{\Gamma}(\bm{T}, \bm{u}, q)} &:=& \sum_{\bm{0} \neq \bm{\alpha}
\in \N^n}{\frac{H_{\Gamma}(\bm{\alpha}, \bm{u},
q)}{\overline{\bm{\alpha}}} \; \bm{T}^{\bm{\alpha}}}, \; \textrm{and}
\label{hgendef} 
\\
A_{\Gamma}(\bm{\alpha}, \bm{u}, q) &:=&
\frac{q-1}{\overline{\bm{\alpha}}} \sum_{d |
\overline{\bm{\alpha}}}{\mu(d) H_{\Gamma}(\bm{\alpha}/d, \bm{u}^d,
q^d)}. \label{agendef} 
\end{eqnarray} 
Observe that $P_{\Gamma}(\bm{T},\bm{u}, q)$, $H_{\Gamma}(\bm{\alpha},
\bm{u}, q)$, and $A_{\Gamma}(\bm{\alpha}, \bm{u}, q)$ specialize to
$P_{\Gamma}(\bm{T}, q)$, $H_{\Gamma}(\bm{\alpha}, q)$, and
$A_{\Gamma}(\bm{\alpha}, q)$ respectively  when $u_{ij} = q^{g_{ij}}$
for $1 \le i \le j \le n$.  However, $A_{\Gamma}(\bm{\alpha}, \bm{u},
q)$ typically is not a polynomial in $q$ even though
$A_{\Gamma}(\bm{\alpha}, q)$ is.
For $\bm{\ell} \in \N^n$ let $\bm{\ell}! := \ell_1! \cdots \ell_n!$
and for $\bm{u} \in \N^{n(n+1)/2}$ let $\bm{u}! := u_{11}!
\cdots u_{ij}! \cdots u_{nn}!$.  Our first result computes a limit
involving $A_{\Gamma}(\bm{\alpha}, \bm{u}, q)$. 

\begin{prop}
\label{pregraphthm}
\begin{eqnarray}
\lim_{q \to 1}{(q-1)^{|\bm{\alpha}|-1} A_{\Gamma}(\bm{\alpha}, \bm{u}, q)}
&=& \left[\textrm{the coefficient of } \bm{T}^{\bm{\alpha}} \textrm{
in } \right] \log{\sum_{\bm{\ell} \in \N^n}{ \bm{u}^{\bm{\ell}} \;
\frac{\bm{T}^{\ell}}{\bm{\ell}!}}}. \label{alim} 
\end{eqnarray}
\end{prop}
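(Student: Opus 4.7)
The plan is to isolate the leading pole of $P_\Gamma(\bm{T},\bm{u},q)$ at $q=1$, take a logarithm, and then show that only the $d=1$ term of Hua's Möbius sum~\eqref{agendef} survives the limit. The key bookkeeping device is the rescaling $T_k \mapsto (q-1)T_k$, which absorbs the factor $(q-1)^{|\bm{\alpha}|}$ coefficient-by-coefficient in $\bm{T}$.

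First I analyze $[\bm{T}^{\bm{\alpha}}]P_\Gamma(\bm{T},\bm{u},q)$, which is a finite sum over tuples $(\lambda^1,\dots,\lambda^n)$ with $|\lambda^k|=\alpha_k$. Because $q^{\langle\lambda^k,\lambda^k\rangle}\to 1$ and each factor $1-q^{-j}$ appearing in $b_{\lambda^k}(q^{-1})$ vanishes to first order at $q=1$, the tuple contributes a pole of order exactly $\sum_k \ell(\lambda^k)$. This quantity is maximized, uniquely, when every $\lambda^k=(1,1,\dots,1)$ with $\alpha_k$ parts, giving order $|\bm{\alpha}|$; using the transpose identity $(1^a)'=(a)$ to get $\langle 1^a,1^b\rangle=ab$, together with $1-q^{-j}=j(q-1)+O((q-1)^2)$, the residue works out to $\bm{u}^{\bm{\alpha}}/\bm{\alpha}!$. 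Hence, coefficient by coefficient in $\bm{T}$,
\[
\lim_{q\to 1} P_\Gamma((q-1)\bm{T},\bm{u},q) \;=\; Q(\bm{T},\bm{u}), \qquad Q(\bm{T},\bm{u}) \;:=\; \sum_{\bm{\ell}\in\N^n} \bm{u}^{\bm{\ell}}\,\bm{T}^{\bm{\ell}}/\bm{\ell}!.
\]

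Since both sides have constant term $1$, taking logarithms commutes with this coefficient-wise limit. Combining with~\eqref{hgendef}, the coefficient of $\bm{T}^{\bm{\alpha}}$ on the left is $(q-1)^{|\bm{\alpha}|}H_\Gamma(\bm{\alpha},\bm{u},q)/\overline{\bm{\alpha}}$, so
\[
\lim_{q\to 1}\frac{(q-1)^{|\bm{\alpha}|}}{\overline{\bm{\alpha}}}\,H_\Gamma(\bm{\alpha},\bm{u},q) \;=\; [\bm{T}^{\bm{\alpha}}]\,\log Q(\bm{T},\bm{u}),
\]
and in particular $H_\Gamma(\bm{\alpha},\bm{u},q)=O((q-1)^{-|\bm{\alpha}|})$ at $q=1$. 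Substituting this bound into~\eqref{agendef}, and using $q^d-1=d(q-1)+O((q-1)^2)$ on each summand, the term indexed by $d$ contributes $O((q-1)^{|\bm{\alpha}|(1-1/d)})$ after multiplying by $(q-1)^{|\bm{\alpha}|-1}$; therefore every $d>1$ vanishes in the limit, only the $d=1$ term survives, and its limit is exactly $[\bm{T}^{\bm{\alpha}}]\log Q(\bm{T},\bm{u})$.

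The main obstacle is the first step: verifying that the all-ones tuple $(1^{\alpha_1},\dots,1^{\alpha_n})$ strictly dominates every other tuple in pole order, and pinning down the correct residue $\bm{u}^{\bm{\alpha}}/\bm{\alpha}!$ so that the limiting series is precisely $Q$. Once this is in hand, continuity of $\log$ on power series with constant term $1$ and the Möbius asymptotic estimate are essentially automatic.
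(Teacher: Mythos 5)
Your proposal is correct and follows essentially the same route as the paper: rescale $T_k \mapsto (q-1)T_k$, observe that only the all-ones partitions survive (since $b_{\lambda}(q^{-1})$ vanishes to order $\ell(\lambda)$ and $\sum_k \ell(\lambda^k) \le |\bm{\alpha}|$ with equality iff every part is $1$), take logarithms coefficient-wise, and discard the $d>1$ Möbius terms. Your explicit $O((q-1)^{|\bm{\alpha}|(1-1/d)})$ estimate for the $d>1$ terms is a slightly more careful version of the paper's appeal to its equation for the vanishing of $(q-1)^{|\bm{\alpha}|}H_{\Gamma}(\bm{\alpha}/d,\cdot,\cdot)$, but the substance is identical.
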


\begin{proof}
We begin with a limit involving $P_{\Gamma}(\bm{T}, \bm{u}, q)$.  By
the definition of $P_{\Gamma}(\bm{T}, \bm{u}, q)$,  
\begin{eqnarray}
&& \lim_{q \to 1}{P_{\Gamma}((q-1)\bm{T}, \bm{u}, q)} \label{plim} \\ 
&=& \lim_{q \to 1}{\sum_{\lambda^1, \dots, \lambda^n \in
\mathcal{P}}{\frac{\prod_{1 \le 
i \le j \le n}{u_{ij}^{\left< \lambda^i, \lambda^j \right>}}}{\prod_{1
\le i \le 
n}{q^{\left< \lambda^i, \lambda^j \right>} b_{\lambda^i}(q^{-1})}}} \;
(q-1)^{|\lambda^1| + 
\cdots + |\lambda^n|} \; T_1^{|\lambda^1|} \cdots
T_n^{|\lambda^n|}}. \nonumber 
\end{eqnarray}
The quantity $b_{\lambda}(q^{-1})$ has a zero at $q = 1$ of
multiplicity $\ell(\lambda)$ (the number of parts of $\lambda$).  Thus
each summand on the right-hand side of Equation~\eqref{plim} has limit 0
as $q \to 1$ unless $\lambda^i = \left(1^{\ell_i}\right)$ for $1 \le i
\le n$ for some $\bm{\ell}=(\ell_1,\ldots,\ell_n) \in \N^n$.  In this case,  
\[
\lim_{q \to
1}{\frac{(q-1)^{\ell_i}}{b_{\left(1^{\ell_i}\right)}(q^{-1})}} =
\frac{1}{\ell_i!}. 
\]
Therefore
\begin{eqnarray}
 \label{plim2} 
\lim_{q \to 1}{P_{\Gamma}((q-1)\bm{T}, \bm{u}, q)}
&=& \sum_{\bm{\ell} \in \N^n}{\bm{u}^{\bm{\ell}}\;
\frac{\bm{T}^{\bm{\ell}}}{\bm{\ell}!}},
\end{eqnarray}
since
$$
\langle (1^{\ell_i}),(1^{\ell_j})\rangle = \ell_i\ell_j.
$$
Combining this limit with Equation~\eqref{hgendef}, the defining
equation for $H_{\Gamma}(\bm{\alpha}, \bm{u}, q)$, shows that 
\begin{eqnarray}
\label{hlim}
\lim_{q \to 1}{\sum_{\bm{0} \neq \bm{\alpha} \in
\N^n}{\frac{H_{\Gamma}(\bm{\alpha}, \bm{u},
q)}{\overline{\bm{\alpha}}} (q-1)^{|\bm{\alpha}|} \;
\bm{T}^{\bm{\alpha}}}} &=& 
\log{\sum_{\bm{\ell} \in \N^n}{ \bm{u}^{\bm{\ell}}} \;
\frac{\bm{T}^{\bm{\ell}}}{\bm{\ell}!}}. 
\end{eqnarray}
In particular, this proves that
\begin{eqnarray}
\label{hlimexists}
\lim_{q \to 1}{(q-1)^{|\bm{\alpha}|} H_{\Gamma}(\bm{\alpha}, \bm{u}, q)}
\end{eqnarray}
exists and therefore
\begin{eqnarray}
\label{hlimis0}
\lim_{q \to 1}{(q-1)^{|\bm{\alpha}|} H_{\Gamma}(\bm{\alpha}/d, \bm{u}, q)} = 0
\end{eqnarray}
if $d$ is greater than 1 and divides $\overline{\bm{\alpha}}$.
Equations~\eqref{agendef},~\eqref{hlim},~\eqref{hlimexists},
and~\eqref{hlimis0} show that 
\begin{eqnarray}
\lim_{q \to 1}{(q-1)^{|\bm{\alpha}|-1} A_{\Gamma}(\bm{\alpha}, \bm{u}, q)}
&=& \lim_{q \to
1}{\frac{(q-1)^{|\bm{\alpha}|}}{\overline{\bm{\alpha}}} \sum_{d |
\overline{\bm{\alpha}}}{\mu(d) H_{\Gamma}(\bm{\alpha}/d, \bm{u}^d,
q^d)}} \nonumber \\ 
&=& \lim_{q \to
1}{\frac{(q-1)^{|\bm{\alpha}|}}{\overline{\bm{\alpha}}}
H_{\Gamma}(\bm{\alpha}, \bm{u}, q)} \nonumber \\ 
&=& \left[\textrm{the coefficient of } \bm{T}^{\bm{\alpha}} \textrm{ 
in } \right] \log{\sum_{\bm{\ell} \in \N^n}{\bm{u}^{\bm{\ell}} \; 
\frac{\bm{T}^{\ell}}{\bm{\ell}!}}}. \nonumber  
\end{eqnarray}
\end{proof}

\section{Multivariate Exponential Formula}
The limit in Proposition~\ref{pregraphthm}, namely
Equation~\eqref{alim}, can be rewritten using a multivariate version
of the Exponential Formula applied to the enumeration of graphs.  Our
presentation of the Multivariate Exponential Formula
(Theorem~\ref{mvexp}) follows that of the usual Exponential Formula
found in Stanley~\cite[Section 5.1]{sta}.

Let $K$ be any field and let $\bm{X} = (X_1, X_2, \dots, X_n)$ be a
vector of indeterminates.  Given a function $f : \N^n \to K$, define  
\begin{eqnarray}
E_f(\bm{X}) &:=& \sum_{\ell \in \N^n}{f(\ell_1, \dots, \ell_n) \;
  \frac{\bm{X}^{\ell}}{\bm{\ell}!}}. 
\end{eqnarray}
Similarly, given a function $f : \N^n \setminus \{\bm{0}\} \to K$,
define
\begin{eqnarray}
  E_f(\bm{X}) &:=& \sum_{\ell \in \N^n \setminus \{\bm{0}\}}{f(\ell_1,
    \dots, \ell_n) \; \frac{\bm{X}^{\ell}}{\bm{\ell}!}}. 
\end{eqnarray}
Also, if $V_1, \dots, V_n$ are finite (disjoint) sets, let $\Pi(V_1,
\dots, V_n)$ denote the set of set partitions of $V_1 \cup \cdots \cup
V_n$.  If $\pi \in \Pi(V_1, \dots, V_n)$, let $\ell(\pi)$ equal the
number of blocks in $\pi$ and let $\pi_1, \dots, \pi_{\ell(\pi)}$
denote the blocks of $\pi$.
\begin{thm}[Multivariate Exponential Formula] \label{mvexp} 
Let $f : \N^n \setminus \{\bm{0}\} \to K$.  Define a function $g :
\N^n \to K$ by $g(0, \dots, 0) := 1$ and by 
\begin{eqnarray}
g(|V_1|, \dots, |V_n|) &:=& \sum_{\pi \in \Pi(V_1, \dots,
  V_n)}{\prod_{i=1}^{\ell(\pi)}{f(|\pi_i \cap V_1|, \dots, |\pi_i \cap
    V_n|)}} \label{mvformula} 
\end{eqnarray}
if at least one of the sets $V_i$ is nonempty.  Then
$\log{E_g(\bm{X})} = E_f(\bm{X})$. 
\end{thm}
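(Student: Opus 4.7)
The plan is to prove the equivalent statement $E_g(\bm{X}) = \exp(E_f(\bm{X}))$ by expanding the exponential as a formal power series in $\bm{X}$ and extracting the coefficient of each monomial $\bm{X}^{\bm{\alpha}}/\bm{\alpha}!$. Since the right-hand side has no constant term (as $f$ is defined on $\N^n \setminus \{\bm{0}\}$), the usual formal-power-series manipulations apply, and the statement of the theorem then follows by taking logarithms.

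First I would write
\[
\exp(E_f(\bm{X})) = \sum_{k \ge 0} \frac{1}{k!} \left( \sum_{\bm{\ell} \in \N^n \setminus \{\bm{0}\}} f(\bm{\ell}) \, \frac{\bm{X}^{\bm{\ell}}}{\bm{\ell}!} \right)^{\!k}
\]
and expand the $k$-th power into a sum over ordered $k$-tuples $(\bm{\ell}^1, \dots, \bm{\ell}^k)$ of nonzero vectors in $\N^n$. Fixing $\bm{\alpha} = \bm{\ell}^1 + \cdots + \bm{\ell}^k$ and collecting the coefficient of $\bm{X}^{\bm{\alpha}}/\bm{\alpha}!$ produces the weight
\[
\frac{\bm{\alpha}!}{\bm{\ell}^1! \cdots \bm{\ell}^k!} \prod_{j=1}^{k} f(\bm{\ell}^j) \;=\; \prod_{i=1}^{n} \binom{\alpha_i}{\ell_i^1, \ldots, \ell_i^k} \prod_{j=1}^{k} f(\bm{\ell}^j).
\]

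The key combinatorial step is to reinterpret this multinomial weight. Fix sets $V_1, \dots, V_n$ with $|V_i| = \alpha_i$; then $\prod_{i} \binom{\alpha_i}{\ell_i^1, \ldots, \ell_i^k}$ counts the ordered sequences $(B_1, \ldots, B_k)$ of disjoint nonempty blocks covering $V_1 \cup \cdots \cup V_n$ such that $|B_j \cap V_i| = \ell_i^j$ for all $i, j$. Summing over all such ordered tuples and then dividing by $k!$ collapses the ordering of the blocks, producing the sum over \emph{unordered} set partitions $\pi \in \Pi(V_1, \dots, V_n)$ with exactly $k$ blocks, weighted by $\prod_{i=1}^{\ell(\pi)} f(|\pi_i \cap V_1|, \dots, |\pi_i \cap V_n|)$. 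Summing over $k \ge 1$ yields $g(\bm{\alpha})$ for $\bm{\alpha} \ne \bm{0}$, while the $k = 0$ term of the exponential matches $g(\bm{0}) = 1$.

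The heart of the argument is purely bookkeeping, and the only real obstacle is verifying cleanly that the multinomial coefficient \emph{is} the number of ordered set-partitions of the prescribed intersection type, and that the division by $k!$ correctly converts ordered to unordered partitions (this is where we use that each $\bm{\ell}^j$ is nonzero, so all $k$ blocks are distinct as sets and the symmetric group $S_k$ acts freely on the orderings). Apart from this careful index juggling, the proof is a direct multivariate adaptation of the argument in Stanley's book and involves no new ideas.
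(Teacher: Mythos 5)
Your proposal is correct and follows essentially the same route as the paper: both reduce to the identity $E_{g}(\bm{X}) = \sum_{k\ge 0}\frac{1}{k!}E_f(\bm{X})^k$, with the key observation in each case being that ordered $k$-tuples of disjoint nonempty blocks correspond exactly $k!$-to-one to unordered partitions into $k$ blocks (the blocks being distinct because they are disjoint and nonempty). You simply carry out the coefficient extraction more explicitly than the paper does; the underlying combinatorial content is identical.
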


\begin{proof}
Fix a positive integer $k$ and define the function $g_k \; : \; \N^n
\setminus \{\bm{0}\} \to K$ by 
\begin{eqnarray}
  g_k(|V_1|, \dots, |V_n|) &:=& \sum_{\genfrac{}{}{0pt}{1}{\pi \in
      \Pi(V_1, \dots, V_n)}{\ell(\pi) = k}}{\prod_{i=1}^{k}{f(|\pi_i
      \cap V_1|, \dots, |\pi_i \cap V_n|)}}. \nonumber 
\end{eqnarray}
Since $\pi_1, \dots, \pi_k$ are non-empty, they are all distinct and
there are $k!$ ways of ordering them.  Thus 
\[
E_{g_k}(\bm{X}) = \frac{1}{k!} \; E_f(\bm{X})^k.
\]
Then
\begin{eqnarray*}
E_g(\bm{X}) &=& 1+\sum_{k=1}^{\infty}{E_{g_k}(\bm{X})} \\
&=& 1+\sum_{k=1}^{\infty}{\frac{1}{k!} \; E_f(\bm{X})^k} \\
&=& e^{E_f(\bm{X})},
\end{eqnarray*}
proving the theorem.
\end{proof}

To apply Theorem~\ref{mvexp} to the enumeration of graphs, we must
introduce some more notation.  If $\bm{\ell} \in \N^n$, let
$\mathcal{G}^{\bm{\ell}}$ be the set of graphs on the vertices $v_1,
v_2, \dots, v_{|\bm{\ell}|}$.  Let $V_1 := \{v_1, \dots, v_{\ell_1}\},
V_2 := \{v_{\ell_1+1}, \dots, v_{\ell_2}\}, \dots, V_n :=
\{v_{|\bm{\ell}|-\ell_n+1}, \dots, v_{|\bm{\ell}|}\}$.  If $\bm{k} \in
\N^{n(n+1)/2}$, then let $\mathcal{G}^{\bm{\ell}}_{\bm{k}}$ be the set
of graphs in $\mathcal{G}^{\bm{\ell}}$ that have $k_{ij}$ edges
between $V_i$ and $V_j$ for $1 \le i \le j \le n$ and let
$G_{\bm{k}}^{\bm{\ell}}$ be the number of connected graphs in
$\mathcal{G}^{\bm{\ell}}_{\bm{k}}$.  Since a connected graph in $\mathcal{G}^{\bm{\ell}}$ must have at
least $|\bm{\ell}|-1$ edges, the following proposition is clear.

\begin{prop}
\label{conngraphsprop}
If $|\bm{k}| < |\bm{\ell}|-1$, then $G_{\bm{k}}^{\bm{\ell}}=0$.
\end{prop}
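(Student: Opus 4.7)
The plan is to reduce the statement to the classical fact that every connected graph on $N$ vertices has at least $N-1$ edges, a fact proved by exhibiting a spanning tree (or by induction on $N$: a connected graph with a leaf remains connected after deleting the leaf and its unique incident edge; one with no leaves has minimum degree at least $2$ and thus at least $N$ edges).

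First, I would unwind the notation. Any graph $G \in \mathcal{G}^{\bm{\ell}}_{\bm{k}}$ has vertex set $V_1 \cup \cdots \cup V_n$ of total size $\sum_{i=1}^n \ell_i = |\bm{\ell}|$, and its edges are partitioned according to which pair $(V_i, V_j)$ (with $i \le j$) they connect. Consequently the total edge count of any such $G$ equals
\[
\sum_{1 \le i \le j \le n} k_{ij} = |\bm{k}|.
\]
Next, suppose for contradiction that $G_{\bm{k}}^{\bm{\ell}} \ne 0$ while $|\bm{k}| < |\bm{\ell}|-1$. Then there exists a connected $G \in \mathcal{G}^{\bm{\ell}}_{\bm{k}}$ with exactly $|\bm{k}|$ edges and $|\bm{\ell}|$ vertices, contradicting the classical bound above. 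Hence $G_{\bm{k}}^{\bm{\ell}} = 0$, as claimed.

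There is no genuine obstacle here; the authors themselves note that the proposition is clear from the spanning-tree bound. The only thing that requires attention is the translation between the paper's refined notation (edges counted by the bipartition type $k_{ij}$, vertices labeled by their block $\ell_i$) and the unstructured counts of edges and vertices to which the classical fact applies, and this translation is immediate from the definitions of $\mathcal{G}^{\bm{\ell}}_{\bm{k}}$ and of $|\bm{k}|$, $|\bm{\ell}|$.
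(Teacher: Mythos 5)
Your proof is correct and is exactly the argument the paper intends: the authors state the proposition is clear because a connected graph on $|\bm{\ell}|$ vertices must have at least $|\bm{\ell}|-1$ edges, and your write-up simply makes explicit the translation from the refined edge counts $k_{ij}$ to the total edge count $|\bm{k}|$. No issues.
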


Now let $\bm{x} = (x_{11}, \dots, x_{ij}, \dots, x_{nn})$ be a vector of $n(n+1)/2$
indeterminates, where $1 \le i \le j \le n$, and define the weight of
$G \in \mathcal{G}^{\bm{\ell}}_{\bm{k}}$ to be
$\bm{x}^{\bm{k}}:=\prod_{1 \le i \le j \le n}x_{ij}^{k_{ij}}$.

\begin{thm} \label{expformula}
\begin{eqnarray}
&& \log{\left( \sum_{\bm{\ell} \in \N^n}{\left( \prod_{1 \le i < j \le
n}{(1+x_{ij})^{\ell_i \ell_j}}\right)\left( \prod_{1 \le i \le
n}{(1+x_{ii})^{\binom{\ell_i}{2}}}\right) \;
\frac{\bm{X}^{\bm{\ell}}}{\bm{\ell}!}} \right)} \label{graphenum} \\ 
&=& \sum_{0 \neq \bm{\alpha} \in \N^n}{\sum_{\bm{k} \in
\N^{n(n+1)/2}}{G_{\bm{k}}^{\bm{\alpha}} \bm{x}^{\bm{k}} \;
\frac{\bm{X}^{\bm{\alpha}}}{\bm{\alpha}!}}}, \nonumber 
\end{eqnarray}
where $G_{\bm{k}}^{\bm{\alpha}}$ is the number of connected graphs in
$\mathcal{G}^{\bm{\alpha}}_{\bm{k}}$. 
\end{thm}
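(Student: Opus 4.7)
The plan is to apply the Multivariate Exponential Formula (Theorem~\ref{mvexp}) with a carefully chosen $f$ so that the right-hand side of the claimed identity becomes $E_f(\bm{X})$, and then identify the resulting $g$ explicitly. Concretely, I would define $f : \N^n \setminus \{\bm{0}\} \to K$ (with $K = \Q[\bm{x}]$, or a power series ring if we want to take $\bm{x}$ as formal variables) by
\[
f(\bm{\alpha}) := \sum_{\bm{k} \in \N^{n(n+1)/2}} G_{\bm{k}}^{\bm{\alpha}} \, \bm{x}^{\bm{k}},
\]
the $\bm{x}$-weighted count of connected graphs with vertex distribution $\bm{\alpha}$. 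Then $E_f(\bm{X})$ is precisely the right-hand side of \eqref{graphenum}.

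Next, I would invoke Theorem~\ref{mvexp} to obtain the companion function $g$ satisfying $\log E_g(\bm{X}) = E_f(\bm{X})$. The value of $g(|V_1|,\ldots,|V_n|)$ is a sum over set partitions $\pi \in \Pi(V_1,\ldots,V_n)$, with summand equal to the product over blocks $\pi_i$ of the weighted count of connected graphs on the vertex set $\pi_i$ whose restriction to the $V_j$'s has sizes $|\pi_i \cap V_j|$. The key combinatorial identification is the bijection
\[
\bigl\{(\pi, \text{a connected graph on each block of }\pi)\bigr\} \;\longleftrightarrow\; \bigl\{\text{arbitrary graphs on }V_1 \cup \cdots \cup V_n\bigr\},
\]
where $\pi$ is recovered from an arbitrary graph as its partition into connected components. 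Thus $g(\bm{\ell})$ equals the $\bm{x}$-weighted count of all graphs with vertex distribution $\bm{\ell}$.

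Finally, to evaluate this count in closed form, I would observe that each of the $\ell_i \ell_j$ potential edges between $V_i$ and $V_j$ (for $i < j$), and each of the $\binom{\ell_i}{2}$ potential edges within $V_i$, may be independently present (contributing a factor of $x_{ij}$ or $x_{ii}$) or absent (contributing $1$). Hence
\[
g(\bm{\ell}) = \prod_{1 \le i < j \le n} (1+x_{ij})^{\ell_i \ell_j} \prod_{1 \le i \le n} (1+x_{ii})^{\binom{\ell_i}{2}},
\]
so that $E_g(\bm{X})$ is exactly the series inside the logarithm on the left of \eqref{graphenum}, and Theorem~\ref{mvexp} yields the identity.

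There is essentially no hard step here: the only nontrivial ingredient, the exponential formula, has already been proved as Theorem~\ref{mvexp}. The point to be careful about is the bookkeeping in the combinatorial identification, namely that the sum over set partitions together with a connected structure on each block really does reproduce every graph exactly once, and that the edge-weight product factors multiplicatively over the connected components so the $\bm{x}^{\bm{k}}$ weights agree on both sides of the bijection.
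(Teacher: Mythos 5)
Your proposal is correct and follows essentially the same route as the paper: you choose the same $f$ (the weighted count of connected graphs), identify $g$ as the weighted count of all graphs via the decomposition into connected components, compute $g(\bm{\ell})$ by independently including or excluding each potential edge, and then apply Theorem~\ref{mvexp}. No gaps.
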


\begin{proof}
Let $K = \C(x_{11}, \dots, x_{ij}, \dots, x_{nn})$.  Let
\[
\begin{array}{rcccl}
f &:& \N^n \setminus \{\bm{0}\} &\to& K \\
&& (\alpha_1, \dots, \alpha_n) &\mapsto& \displaystyle \sum_{k \in \N^{n(n+1)/2}}{G_{\bm{k}}^{\bm{\alpha}} \bm{x}^{\bm{k}}} \\
&&&& \\
g &:& \N^n &\to& K \\
&& (\ell_1, \dots, \ell_n) &\mapsto& \displaystyle \prod_{1 \le i \le j \le n}{(1+x_{ij})^{\ell_i \ell_j}} \prod_{1 \le i \le n}{(1+x_{ii})^{\binom{\ell_{i}}{2}}}.
\end{array}
\]
The sum of the weights of the connected graphs in
$\mathcal{G}^{\bm{\alpha}}$ equals $f(\alpha_1, \dots, \alpha_n)$.
The sum of the weights of the graphs in $\mathcal{G}^{\bm{\ell}}$
equals $g(\ell_1, \dots, \ell_n)$.  A graph in
$\mathcal{G}^{\bm{\ell}}$ is obtained by choosing a partition of the
vertex set and choosing a connected graph structure for each block of
the partition (and the weight of such a graph is the product of the
weights of its connected components).  It is clear that $f$ and $g$
satisfy Equation~\eqref{mvformula}, so Theorem~\ref{mvexp} proves the
desired result. 
\end{proof}

Perhaps Theorem~\ref{mvexp} and its application in
Theorem~\ref{expformula} to the enumeration of graphs are known, but
we do not know of any reference and hence have included the proofs.
Presumbly there is no explicit formula for $G_{\bm{k}}^{\bm{\alpha}}$
in general.  However, when $|\bm{k}| = |\bm{\alpha}|-1$ (that is, when
the connected graphs in $\mathcal{G}_{\bm{k}}^{\bm{\alpha}}$ are
trees), certain sums of the numbers $G_{\bm{k}}^{\bm{\alpha}}$ can be
computed by the methods in Knuth~\cite{knu}.

\section{$A_\Gamma(\bm{\alpha}, \bm{u}, q)$ and Connected Graphs}

To understand Equation~\eqref{alim} better, we obtain a corollary of
Theorem~\ref{expformula} by rewriting Equation~\eqref{graphenum} with
the substitutions $1 + x_{ij} = u_{ij}$ ($1 \le i < j \le n$), $1 +
x_{ii} = u_{ii}^2$ ($1 \le i \le n$), and $X_i = u_{ii} T_i$ ($1 \le i
\le n$).
\begin{cor}
\label{expcor}
\begin{eqnarray}
&& \log{\sum_{\bm{\ell} \in \N^n}{\bm{u}^{\bm{\ell}} \;
\frac{\bm{T}^{\bm{\ell}}}{\bm{\ell}!}}} \label{expformulamod} \\  
&=& \sum_{\bm{0} \neq \bm{\alpha} \in \N^n}{\sum_{\bm{k} \in
\N^{n(n+1)/2}}{G_{\bm{k}}^{\bm{\alpha}}\left( \prod_{1 \le i < j \le
n}{(u_{ij}-1)^{k_{ij}}} \right) \left( \prod_{1 \le i \le
n}{(u_{ii}^2-1)^{k_{ii}}} \right) \left( \prod_{1 \le i \le
n}{u_{ii}^{\alpha_i}} \right) \;
\frac{\bm{T}^{\bm{\alpha}}}{\bm{\alpha}!}}}. \nonumber 
\end{eqnarray} 
\end{cor}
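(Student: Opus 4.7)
The plan is to apply the change of variables $x_{ij} = u_{ij} - 1$ for $1 \le i < j \le n$, $x_{ii} = u_{ii}^2 - 1$ for $1 \le i \le n$, and $X_i = u_{ii} T_i$ for $1 \le i \le n$ directly to the identity \eqref{graphenum} furnished by Theorem \ref{expformula}. Both sides of \eqref{graphenum} are formal power series in $\bm{x}$ and $\bm{X}$, so the substitution yields an equally valid identity in the formal power series ring $\Q[[u_{11},\dots,u_{nn},T_1,\dots,T_n]]$. No analytic issue arises and no new combinatorial input is needed.

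To transform the left-hand side, I would note that each off-diagonal factor $(1 + x_{ij})^{\ell_i \ell_j}$ becomes $u_{ij}^{\ell_i \ell_j}$, while the diagonal factor $(1 + x_{ii})^{\binom{\ell_i}{2}}$ becomes $u_{ii}^{2\binom{\ell_i}{2}} = u_{ii}^{\ell_i(\ell_i - 1)}$, and $\bm{X}^{\bm{\ell}}$ contributes an additional $\prod_i u_{ii}^{\ell_i}$ alongside $\bm{T}^{\bm{\ell}}$. Combining the two $u_{ii}$-contributions produces $u_{ii}^{\ell_i(\ell_i-1) + \ell_i} = u_{ii}^{\ell_i^2}$, which is precisely the $i = j$ factor of $\bm{u}^{\bm{\ell}} = \prod_{1 \le i \le j \le n} u_{ij}^{\ell_i \ell_j}$. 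So the left-hand side becomes $\log \sum_{\bm{\ell}} \bm{u}^{\bm{\ell}} \bm{T}^{\bm{\ell}} / \bm{\ell}!$, matching the corollary. On the right-hand side, $\bm{x}^{\bm{k}}$ splits into $\prod_{i < j}(u_{ij} - 1)^{k_{ij}} \prod_i (u_{ii}^2 - 1)^{k_{ii}}$, and $\bm{X}^{\bm{\alpha}}$ contributes $\prod_i u_{ii}^{\alpha_i}$ on top of $\bm{T}^{\bm{\alpha}}$; assembling these factors reproduces the right-hand side of the corollary.

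Since the whole argument reduces to a formal substitution, there is no genuine obstacle. The one point that merits attention is the exponent bookkeeping on the diagonal variables: the perhaps-unexpected choice $1 + x_{ii} = u_{ii}^2$, rather than the seemingly more natural $u_{ii}$, is exactly what is needed so that the quadratic $\binom{\ell_i}{2}$-contribution from edges within $V_i$ combines with the linear $\ell_i$-contribution from $X_i = u_{ii} T_i$ to yield the single quadratic factor $u_{ii}^{\ell_i^2}$ demanded by $\bm{u}^{\bm{\ell}}$. Once this is recognized, the corollary follows immediately.
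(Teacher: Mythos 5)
Your proposal is correct and is exactly the paper's argument: the paper states Corollary~\ref{expcor} as an immediate consequence of Theorem~\ref{expformula} under the substitutions $1+x_{ij}=u_{ij}$, $1+x_{ii}=u_{ii}^2$, $X_i=u_{ii}T_i$, and your exponent bookkeeping (in particular $u_{ii}^{\ell_i(\ell_i-1)}\cdot u_{ii}^{\ell_i}=u_{ii}^{\ell_i^2}$ matching the $i=j$ factor of $\bm{u}^{\bm{\ell}}$) is precisely the verification the paper leaves implicit.
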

This allows us to rewrite the result of Proposition~\ref{pregraphthm}.
For each $\bm{k} \in \N^{n(n+1)/2}$, let 
\begin{equation}
\label{sdef}
S_{\bm{k}} = \left\{ \bm{p} \in \N^{n(n+1)/2} \; : \;
  \begin{array}{l}
    k_{ii} \ge p_{ii} \textrm{ for } 1 \le i \le n \\
    k_{ij} = p_{ij} \textrm{ for } 1 \le i < j \le n
  \end{array}
    \right\}.
\end{equation}

\begin{prop}
\label{intermediatethm}
For each $\bm{k} \in \N^{n(n+1)/2}$,
\begin{eqnarray}
&& \left[ \textrm{the coefficient of } (\bm{u}-\bm{1})^{\bm{k}}  \textrm{ in }
\right] \lim_{q \to 
1}{(q-1)^{|\bm{\alpha}|-1} A_{\Gamma}(\bm{\alpha}, \bm{u}, q)}
\label{alimcoef} \\ 
&=& \frac{1}{\bm{\alpha}!} \sum_{\bm{p} \in
S_{\bm{k}}} c^{\bm{\alpha}}_{\bm{k}\bm{p}}\; {G_{\bm{p}}^{\bm{\alpha}}}, 
\nonumber
\end{eqnarray}
where
\[
c^{\bm{\alpha}}_{\bm{k}\bm{p}}:= \prod_{1 \le i \le n}{\left(
\sum_{j=0}^{\infty}{\binom{p_{ii}}{j} \binom{\alpha_i}{k_{ii} - p_{ii}
- j} 2^{p_{ii}-j}} \right)}
\]
and
\[
(\bm{u}-\bm{1})^{\bm{k}}:=\prod_{1\le i \le j \le n}(u_{ij}-1)^{k_{ij}}.
\]
In particular, if $|\bm{k}|=|\bm{\alpha}|-1$, then
\begin{eqnarray}
&& \left[ \textrm{the coefficient of } (\bm{u}-\bm{1})^{\bm{k}}
\textrm{ in } 
\right] \lim_{q \to 
1}{(q-1)^{|\bm{\alpha}|-1} A_{\Gamma}(\bm{\alpha}, \bm{u}, q)}
\label{alimcoef-1} \\ 
&=& \frac{1}{\bm{\alpha}!}2^{t(\bm{k})} \; {G_{\bm{k}}^{\bm{\alpha}}}, 
\nonumber
\end{eqnarray}
where
$$
t(\bm{k}):=\sum_{1 \le i \le n}{k_{ii}}.
$$
\end{prop}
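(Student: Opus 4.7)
The plan is to combine Proposition~\ref{pregraphthm} with Corollary~\ref{expcor} to obtain an explicit expression for $\lim_{q\to 1}(q-1)^{|\bm{\alpha}|-1}A_\Gamma(\bm{\alpha},\bm{u},q)$ as a polynomial in $\bm{u}$ with coefficients indexed by connected graphs, and then simply extract the coefficient of $(\bm{u}-\bm{1})^{\bm{k}}$. Substituting Corollary~\ref{expcor} into Proposition~\ref{pregraphthm} and taking the coefficient of $\bm{T}^{\bm{\alpha}}$ gives
\[
\lim_{q\to 1}(q-1)^{|\bm{\alpha}|-1}A_\Gamma(\bm{\alpha},\bm{u},q)
=\frac{1}{\bm{\alpha}!}\sum_{\bm{p}\in\N^{n(n+1)/2}}
G^{\bm{\alpha}}_{\bm{p}}
\Bigl(\prod_{i<j}(u_{ij}-1)^{p_{ij}}\Bigr)
\Bigl(\prod_{i}(u_{ii}^2-1)^{p_{ii}}\Bigr)
\Bigl(\prod_{i}u_{ii}^{\alpha_i}\Bigr),
\]
so the task reduces to reading off one coefficient of an explicit polynomial.

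Next, I would isolate each variable $u_{ij}$ separately. The off-diagonal factors are already powers of $u_{ij}-1$, so extracting the coefficient of $(u_{ij}-1)^{k_{ij}}$ for $i<j$ forces $p_{ij}=k_{ij}$; this produces the off-diagonal defining condition of $S_{\bm{k}}$. For the diagonal variable $u_{ii}$, I would set $v_{ii}:=u_{ii}-1$ and use the factorizations
\[
(u_{ii}^2-1)^{p_{ii}}=v_{ii}^{p_{ii}}(v_{ii}+2)^{p_{ii}},\qquad u_{ii}^{\alpha_i}=(1+v_{ii})^{\alpha_i},
\]
so that the coefficient of $v_{ii}^{k_{ii}}$ in $v_{ii}^{p_{ii}}(v_{ii}+2)^{p_{ii}}(1+v_{ii})^{\alpha_i}$ becomes the coefficient of $v_{ii}^{k_{ii}-p_{ii}}$ in $(v_{ii}+2)^{p_{ii}}(1+v_{ii})^{\alpha_i}$. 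Expanding both factors by the binomial theorem and convolving gives
\[
\sum_{j\ge 0}\binom{p_{ii}}{j}2^{p_{ii}-j}\binom{\alpha_i}{k_{ii}-p_{ii}-j},
\]
which is exactly the $i$-th factor of $c^{\bm{\alpha}}_{\bm{k}\bm{p}}$. Taking the product over $i$ and noting that this expression automatically vanishes when $p_{ii}>k_{ii}$ (so we may restrict to $\bm{p}\in S_{\bm{k}}$) yields~\eqref{alimcoef}.

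For the specialization~\eqref{alimcoef-1}, I would apply Proposition~\ref{conngraphsprop}: if $|\bm{k}|=|\bm{\alpha}|-1$, then any $\bm{p}\in S_{\bm{k}}$ satisfies $|\bm{p}|\le|\bm{k}|=|\bm{\alpha}|-1$, with equality only when $p_{ii}=k_{ii}$ for every $i$, i.e.\ $\bm{p}=\bm{k}$. All other terms contribute $G^{\bm{\alpha}}_{\bm{p}}=0$. In the surviving term, $k_{ii}-p_{ii}=0$ forces $j=0$ in the sum defining $c^{\bm{\alpha}}_{\bm{k}\bm{k}}$, leaving $\prod_i 2^{k_{ii}}=2^{t(\bm{k})}$, as claimed.

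The proof is essentially bookkeeping once the reductions above are in place; the only non-routine step is recognizing the right change of variables $v_{ii}=u_{ii}-1$ and the factorization $u_{ii}^2-1=v_{ii}(v_{ii}+2)$, which is what produces both the binomial convolution in $c^{\bm{\alpha}}_{\bm{k}\bm{p}}$ and the clean factor $2^{t(\bm{k})}$ in the leading case. I do not anticipate any substantive obstacle beyond making sure the ranges of the indices $\bm{p}$ match the definition of $S_{\bm{k}}$ and that vanishing outside $S_{\bm{k}}$ is handled correctly.
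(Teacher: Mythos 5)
Your proposal is correct and follows essentially the same route as the paper: combine Proposition~\ref{pregraphthm} with Corollary~\ref{expcor}, expand everything in powers of $u_{ij}-1$ via the factorization $u_{ii}^2-1=(u_{ii}-1)(u_{ii}+1)$ (your $v_{ii}(v_{ii}+2)$ is the same identity), and read off the coefficient, then invoke Proposition~\ref{conngraphsprop} to isolate $\bm{p}=\bm{k}$ in the case $|\bm{k}|=|\bm{\alpha}|-1$. Your explicit remark that the binomial convolution vanishes when $p_{ii}>k_{ii}$, justifying the restriction to $S_{\bm{k}}$, is a small point the paper leaves implicit, but the argument is otherwise identical.
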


\begin{proof}
Combining Proposition~\ref{pregraphthm} and Corollary~\ref{expcor},
changing variables from $\bm{k}$ to $\bm{p}$ for convenience, and
using the Binomial Theorem shows that 
\begin{eqnarray} 
&& \lim_{q \to 1}{(q-1)^{|\bm{\alpha}|-1} A_{\Gamma}(\bm{\alpha},
\bm{u}, q)} \nonumber \\ 
&=& \frac{1}{\bm{\alpha}!} \sum_{\bm{p} \in
\N^{n(n+1)/2}}{G_{\bm{p}}^{\bm{\alpha}} \left( \prod_{1 \le i < j \le
n}{(u_{ij}-1)^{p_{ij}}} \right) \left( \prod_{1 \le i \le
n}{(u_{ii}^2-1)^{p_{ii}}} \right) \left( \prod_{1 \le i \le
n}{u_{ii}^{\alpha_i}} \right)} \nonumber \\ 
&=& \frac{1}{\bm{\alpha}!} \sum_{\bm{p} \in
\N^{n(n+1)/2}}{G_{\bm{p}}^{\bm{\alpha}} \left( \prod_{1 \le i \le
n}{(u_{ij}+1)^{p_{ii}}} \right) \left( \prod_{1 \le i \le
n}{u_{ii}^{\alpha_{ii}}} \right) \left( \prod_{1 \le i \le j \le
n}{(u_{ij}-1)^{p_{ij}}} \right)} \nonumber \\ 
&=& \frac{1}{\bm{\alpha}!} \sum_{\bm{p} \in
\N^{n(n+1)/2}}{G_{\bm{p}}^{\bm{\alpha}} \left( \prod_{1 \le i \le
n}{\sum_{j=0}^{p_{ii}}{\binom{p_{ii}}{j} (u_{ii}-1)^j 2^{p_{ii}-j}}}
\right) } \nonumber \\ 
&& \hspace{1.5in} \cdot \left( \prod_{1 \le i \le
n}{\sum_{j=0}^{\alpha_i}{\binom{\alpha_i}{j} (u_{ii}-1)^j}} \right)
\left(\prod_{1 \le i \le j \le n}{(u_{ij} -
1)^{p_{ij}}}\right). \nonumber 
\end{eqnarray}
Note that $G_{\bm{p}}^{\bm{\alpha}}$ is nonzero for finitely many
$\bm{p}$, so the above sum over $\bm{p}$ is a finite sum.  The
expression  
\[
G_{\bm{p}}^{\bm{\alpha}} \left( \prod_{1 \le i \le
n}{\sum_{j=0}^{p_{ii}}{\binom{p_{ii}}{j} (u_{ii}-1)^j 2^{p_{ii}-j}}}
\right) \left( \prod_{1 \le i \le
n}{\sum_{j=0}^{\alpha_i}{\binom{\alpha_i}{j} (u_{ii}-1)^j}} \right)
\left( \prod_{1 \le i \le j \le n}{(u_{ij} - 1)^{p_{ij}}} \right), 
\] 
when viewed as a polynomial in the variables $u_{ij}-1$ ($1 \le i \le j \le n$), has
a nonzero 
$(\bm{u}-\bm{1})^{\bm{k}}$
term if
$\bm{p} \in S_{\bm{k}}$, and the coefficient of that term is  
\[
G_{\bm{p}}^{\bm{\alpha}} \prod_{1 \le i \le n}{\left(
\sum_{j=0}^{\infty}{\binom{p_{ii}}{j} \binom{\alpha_i}{k_{ii} - p_{ii}
- j} 2^{p_{ii}-j}} \right)}. 
\]
Summing over all $\bm{p} \in S_{\bm{k}}$ and dividing by
$\bm{\alpha}!$ completes the proof of Equation~\eqref{alimcoef}. 

To prove Equation~\eqref{alimcoef-1} it is enough to note that by
Theorem~\ref{conngraphsprop}, the only non-zero summand on the right-hand
side of Equation~\eqref{alimcoef} occurs when $\bm{p}=\bm{k}$ and 
\[
c^{\bm{\alpha}}_{\bm{k}\bm{k}}=2^{t(\bm{k})}.
\]
\end{proof}

Observe that the left- and right-hand sides of
Equation~\eqref{alimcoef} are nonzero for finitely many $\bm{k}$ and
that the sum over $j$ is actually finite by the definition of binomial
coefficients.  Also, the sum over $j$ can be expressed in terms of a
hypergeometric series as 
\[
\sum_{j=0}^{\infty}{\binom{p_{ii}}{j} \binom{\alpha_i}{k_{ii} - p_{ii}
- j} 2^{p_{ii}-j}} 
= 2^{p_{ii}} \binom{\alpha_i}{k_{ii}-p_{ii}}\,_2F_1(-p_{ii}, 
-k_{ii}+p_{ii}; a_i-k_{ii}+p_{ii}+1; 1/2), 
\]
if desired.

\section{A Mahler-type Expansion for $A_{\Gamma}(\bm{\alpha}, \bm{u}, q)$}
\label{mahlersec}

We can use Proposition~\ref{intermediatethm} to understand
$A_{\Gamma}(\bm{\alpha}, q)$ if we rewrite $A_{\Gamma}(\bm{\alpha},
\bm{u}, q)$ using the Mahler-type expansion given in the following
theorem.

\begin{thm}
\label{expthm}
 If $f \in \Q(q)[x_1, \dots, x_r]$ and $f(q^{b_1}, \dots, q^{b_r}) \in \Z[q]$
for all non-negative integers $b_1, \dots, b_r$, then there are polynomials
$\{c_{\bm{\ell}}(q) \in \Z[q] \; : \; \bm{\ell} \in \N^r\}$ such that
\begin{eqnarray}
\label{mahlerexp}
 f &=& \sum_{\bm{\ell} \in \N^r}{c_{\bm{\ell}}(q) \prod_{1 \le i
\le r}{\qbinomf{x_i}{\ell_i}}}, 
\end{eqnarray}
where
\begin{eqnarray}
\label{angledef}
\qbinomf{x}{\ell} &:=& \prod_{1 \le i' \le
  \ell}{\frac{(x/q^{i'-1}-1)}{(q^{i'}-1)}}
\end{eqnarray}
and $c_\ell(q) = 0$ for all but finitely many $\ell$.

\end{thm}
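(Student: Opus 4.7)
The plan is to induct on the number of variables $r$ after establishing the single-variable case via a $q$-analog of finite-difference Möbius inversion. First observe that $\qbinomf{x}{\ell}$ is a polynomial in $x$ of degree exactly $\ell$, with leading coefficient $q^{-\binom{\ell}{2}}/\prod_{i=1}^\ell(q^i-1)$, a unit of $\Q(q)$. Hence $\{\prod_{i=1}^r \qbinomf{x_i}{\ell_i}\}_{\bm{\ell}\in\N^r}$ is a $\Q(q)$-basis for $\Q(q)[x_1,\dots,x_r]$, and $f$ admits a unique expansion of the form~\eqref{mahlerexp} with $c_{\bm{\ell}}(q)\in\Q(q)$ and only finitely many nonzero (controlled by the multidegree of $f$). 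The content of the theorem is the integrality statement $c_{\bm{\ell}}(q)\in\Z[q]$.

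For the base case $r=1$, note that $\qbinomf{q^b}{\ell} = \qbinom{b}{\ell}$, the ordinary Gaussian binomial coefficient, which lies in $\Z[q]$. Evaluating $f=\sum_\ell c_\ell(q)\qbinomf{x}{\ell}$ at $x=q^b$ yields the triangular system $f(q^b)=\sum_{\ell=0}^{b} c_\ell(q)\qbinom{b}{\ell}$, which I will invert via the $q$-Möbius formula
\[
c_\ell(q) \;=\; \sum_{k=0}^\ell (-1)^{\ell-k}\, q^{\binom{\ell-k}{2}}\,\qbinom{\ell}{k}\, f(q^k).
\]
This identity is equivalent to $\sum_{k=m}^\ell (-1)^{\ell-k}q^{\binom{\ell-k}{2}}\qbinom{\ell}{k}\qbinom{k}{m} = \delta_{\ell,m}$, and the latter reduces---using $\qbinom{\ell}{k}\qbinom{k}{m}=\qbinom{\ell}{m}\qbinom{\ell-m}{k-m}$ and the symmetry $\qbinom{n}{j}=\qbinom{n}{n-j}$---to the $z=-1$ specialization of the finite $q$-binomial theorem, $\sum_{j=0}^n(-1)^j q^{\binom{j}{2}}\qbinom{n}{j}=\delta_{n,0}$. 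Every factor on the right-hand side of the inversion formula lies in $\Z[q]$ by hypothesis together with the standard integrality of the Gaussian binomials, so $c_\ell(q)\in\Z[q]$.

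For the inductive step, view $f$ as a polynomial in $x_r$ with coefficients in $R=\Q(q)[x_1,\dots,x_{r-1}]$. The polynomials $\qbinomf{x_r}{\ell_r}\in\Q(q)[x_r]\subset R[x_r]$ have $\Q(q)^\times$-valued leading coefficients in $x_r$, so $f$ admits a unique finite expansion
\[
f \;=\; \sum_{\ell_r\ge 0} A_{\ell_r}(x_1,\dots,x_{r-1})\,\qbinomf{x_r}{\ell_r},\qquad A_{\ell_r}\in R.
\]
Applying the single-variable inversion formula at the level of $R$-coefficients gives
\[
A_{\ell_r}(x_1,\dots,x_{r-1}) \;=\; \sum_{k=0}^{\ell_r} (-1)^{\ell_r-k}\,q^{\binom{\ell_r-k}{2}}\,\qbinom{\ell_r}{k}\, f(x_1,\dots,x_{r-1},q^k),
\]
so that $A_{\ell_r}(q^{b_1},\dots,q^{b_{r-1}})\in\Z[q]$ for all non-negative integers $b_1,\dots,b_{r-1}$. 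The inductive hypothesis then produces $\Z[q]$-expansions of each (finitely many nonzero) $A_{\ell_r}$ in the remaining basis elements, and substituting back finishes the proof.

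The main obstacle is the clean verification of the one-variable $q$-Möbius inversion identity; once that is in hand, the rest is routine bookkeeping through the induction. A natural shortcut that avoids the induction is to apply the inversion simultaneously in all $r$ coordinates, yielding the closed multivariate formula
\[
c_{\bm{\ell}}(q) \;=\; \sum_{\bm{k}\le\bm{\ell}} \left(\prod_{i=1}^r (-1)^{\ell_i-k_i}\,q^{\binom{\ell_i-k_i}{2}}\,\qbinom{\ell_i}{k_i}\right) f(q^{k_1},\dots,q^{k_r}),
\]
from which the integrality of $c_{\bm{\ell}}(q)$ is immediate from the hypothesis on $f$.
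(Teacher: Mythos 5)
Your proof is correct and takes essentially the same route as the paper's: both first note that the products $\prod_{i}\qbinomf{x_i}{\ell_i}$ form a $\Q(q)$-basis (so the expansion exists uniquely over $\Q(q)$) and then obtain integrality from the explicit inversion formula $c_{\bm{\ell}}(q)=\sum_{\bm{k}\le\bm{\ell}}\bigl(\prod_{i=1}^r(-1)^{\ell_i-k_i}q^{\binom{\ell_i-k_i}{2}}\qbinom{\ell_i}{k_i}\bigr)f(q^{k_1},\dots,q^{k_r})$, which the paper derives via the $q$-difference operators $\Delta_n=(E-I)(E-qI)\cdots(E-q^{n-1}I)$ and you derive by directly verifying the $q$-M\"obius inversion identity from the $q$-binomial theorem. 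Your variable-by-variable induction is sound but, as you observe yourself, redundant once the multivariate inversion formula is in hand.
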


The proof of this theorem was communicated to us by Keith Conrad.

\begin{proof}[Proof by Keith Conrad]

An expression like Equation~\eqref{mahlerexp} exists for any polynomial $f$
for some unique $c_{\bm{\ell}}(q)\in \Q(q)$ since the products
\[
\prod_{1 \le i
\le r}{\qbinomf{x_i}{\ell_i}}
\]
are an additive basis of $\Q(q)[x_1,\ldots,x_r]$.  We now show that
under the hypothesis on $f$ we actually have $c_{\bm{\ell}}(q)\in \Z[q]$.

Let $E$ be the shift operator acting on functions $h$ of 
$b\in\Z_{\geq0}$ by
\[
(Eh)(b):=h(b+1),
\]
and for each non-negative integer $n$, define
\[
\Delta_n:=(E-I)(E-qI)\cdots(E-q^{n-1}I),
\] 
where $I$ is the identity operator.  Since $f(b_1, \dots,
b_r):=F(q^{b_1}, \dots, q^{b_r})$ for non-negative integers $b_1,
\dots, b_r$,
a standard property of the $q$-difference operators $\Delta_i$ yields
\[
c_{\bm{\ell}}(q)=(\Delta_1^{\ell_1}\cdots\Delta_r^{\ell_r}F)(0,\ldots,0),
\]
where ${\bm{\ell}}=(\ell_1,\ldots,\ell_r)$ and $\Delta_i$ acts on the variable
$b_i$. Explicitly, we obtain the following expression
\[
c_{\bm{\ell}}(q)=\sum_{\bm{j} \in \N^r}\prod_{i=1}^r\binom{\ell_i}{j_i}(-1)^{j_i}q^{j_i(j_i-1)/2}
f(q^{\ell_1-j_1},\ldots, q^{\ell_r-j_r}),
\]
which shows that $c_{\bm{\ell}}(q)\in \Z[q]$ since by hypothesis
$f(q^{\ell_1-j_1},\ldots, q^{\ell_r-j_r})\in \Z[q]$ when $j_i \le \ell_i$ for $1 \le i \le r$.
\end{proof}

Note that $\qbinomf{x}{\ell} = \qbinom{b}{\ell}$ when $x = q^b$ and that
\begin{eqnarray}
\label{qbinlim}
\lim_{q \to 1}{(q-1)^{\ell} \qbinomf{x}{\ell}} &=& \frac{(x-1)^{\ell}}{\ell!}.
\end{eqnarray} 
Here $\qbinom{b}{\ell}$ is a $q$-binomial coefficient (see
Appendix~\ref{appendixsec} for more information about $q$-binomial
coefficients).  By Theorem~\ref{expthm} and the fact that
$A_{\Gamma}(\bm{\alpha}, q) \in \Z[q]$, we can write 
\begin{eqnarray}
 \label{aexp}
A_{\Gamma}(\bm{\alpha}, \bm{u}, q) &=& \sum_{\bm{k} \in
\N^{n(n+1)/2}}{a_{\Gamma}(\bm{\alpha}, \bm{k}, q)}\; \qbinomf{\bm{u}}{\bm{k}},
\end{eqnarray}
for some $a_{\Gamma}(\bm{\alpha}, \bm{k}, q) \in \Z[q]$, where 
$$
\qbinomf{\bm{u}}{\bm{k}}:= \prod_{1 \le i \le j \le
n}{\qbinomf{u_{ij}}{k_{ij}}}.
$$
  Hence
\begin{eqnarray}
\label{aqexp}
A_{\Gamma}(\bm{\alpha}, q) &=& \sum_{\bm{k} \in
\N^{n(n+1)/2}}{a_{\Gamma}(\bm{\alpha}, \bm{k}, q)}\;
\qbinom{\bm{g}}{\bm{k}},
\end{eqnarray}
where
$$
\qbinom{\bm{g}}{\bm{k}}:= \prod_{1 \le i \le j \le
n}{\qbinom{g_{ij}}{k_{ij}}}.
$$
It turns out that Proposition~\ref{intermediatethm} leads to a formula for the
derivatives of $a_{\Gamma}(\bm{\alpha}, \bm{k}, q)$ evaluated at $q =
1$, given in Proposition~\ref{coefderivthm}, which in turn produces a
formula for the derivatives of $A_{\Gamma}(\bm{\alpha}, q)$ evaluated
at $q = 1$ (see Theorems~\ref{zeroderivativethm}
and~\ref{leadingcoeffthm}).  

\begin{prop}
\label{coefderivthm}
For $\bm{k}\in \N^{n(n+1)/2}$ such that $|\bm{k}|> |\bm{\alpha}|$ we
have 
\begin{eqnarray}
\left.
 \frac{a_{\Gamma}(\bm{\alpha}, \bm{k}, q)}{(q-1)^{
     |\bm{k}|-|\bm{\alpha}|+1}} 
\right|_{q=1} 
&=& \frac{\bm{k}!}{\bm{\alpha}!} \sum_{\bm{p} \in  
S_{\bm{k}}} c^{\bm{\alpha}}_{\bm{k}\bm{p}} \; G_{\bm{p}}^{\bm{\alpha}}.
\end{eqnarray}
\end{prop}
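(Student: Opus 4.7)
The plan is to compare coefficients of $(\bm{u}-\bm{1})^{\bm{k}}$ on both sides of the Mahler expansion~\eqref{aexp} after multiplying by $(q-1)^{|\bm{\alpha}|-1}$, evaluate the left-hand limit via Proposition~\ref{intermediatethm}, and then perform a downward induction on $|\bm{k}|$ to isolate the $\bm{k}' = \bm{k}$ term in the resulting sum.

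First, I would analyze the expansion of each $\qbinomf{u}{\ell}$ in powers of $u-1$. Writing $u - q^{i-1} = (u-1) - (q^{i-1}-1)$ in the product formula $\qbinomf{u}{\ell} = q^{-\ell(\ell-1)/2}\prod_{i=1}^{\ell}(u-q^{i-1})\big/\prod_{i=1}^{\ell}(q^i-1)$ shows that the coefficient $\beta_{\ell,m}(q)$ of $(u-1)^m$ in $(q-1)^\ell\qbinomf{u}{\ell}$ is a polynomial in $q$ regular at $q=1$, with $\beta_{\ell,\ell}(1) = 1/\ell!$ and, for $m \ge 1$, vanishing to order exactly $\ell-m$ at $q=1$. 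Taking products over the $n(n+1)/2$ edge indices, the quantity $\beta_{\bm{k}',\bm{k}}(q) := \prod_{1 \le i \le j \le n}\beta_{k'_{ij},k_{ij}}(q)$---the coefficient of $(\bm{u}-\bm{1})^{\bm{k}}$ in $(q-1)^{|\bm{k}'|}\qbinomf{\bm{u}}{\bm{k}'}$---then vanishes at $q=1$ to order at least $|\bm{k}'|-|\bm{k}|$ whenever $\bm{k}' \ge \bm{k}$, with $\beta_{\bm{k},\bm{k}}(1) = 1/\bm{k}!$. Substituting into~\eqref{aexp} and noting that $A_\Gamma(\bm{\alpha},\bm{u},q)$ is a polynomial in $\bm{u}$ (so only finitely many $a_\Gamma(\bm{\alpha},\bm{k}',q)$ are nonzero), the coefficient of $(\bm{u}-\bm{1})^{\bm{k}}$ in $(q-1)^{|\bm{\alpha}|-1} A_\Gamma(\bm{\alpha},\bm{u},q)$ is the finite sum
\[
\sum_{\bm{k}' \ge \bm{k}} a_\Gamma(\bm{\alpha},\bm{k}',q)\,\beta_{\bm{k}',\bm{k}}(q)\,(q-1)^{|\bm{\alpha}|-1-|\bm{k}'|},
\]
whose $q \to 1$ limit equals $\tfrac{1}{\bm{\alpha}!}\sum_{\bm{p} \in S_{\bm{k}}} c^{\bm{\alpha}}_{\bm{k}\bm{p}} G^{\bm{\alpha}}_{\bm{p}}$ by Proposition~\ref{intermediatethm}.

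I would then argue by downward induction on $|\bm{k}|$. The base case is any $|\bm{k}|$ exceeding the maximum of $|\bm{k}'|$ with $a_\Gamma(\bm{\alpha},\bm{k}',q) \ne 0$, where the statement holds vacuously. For the inductive step, with $|\bm{k}| > |\bm{\alpha}|$, the inductive hypothesis applied to each $\bm{k}' > \bm{k}$ (where $|\bm{k}'| > |\bm{k}| > |\bm{\alpha}|$) yields $a_\Gamma(\bm{\alpha},\bm{k}',q) = O((q-1)^{|\bm{k}'|-|\bm{\alpha}|+1})$, so the corresponding summand above is $O((q-1)^{|\bm{k}'|-|\bm{k}|})$ and vanishes as $q \to 1$. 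Only the $\bm{k}' = \bm{k}$ summand survives, giving
\[
\frac{1}{\bm{\alpha}!}\sum_{\bm{p} \in S_{\bm{k}}} c^{\bm{\alpha}}_{\bm{k}\bm{p}} G^{\bm{\alpha}}_{\bm{p}} = \lim_{q \to 1}\frac{a_\Gamma(\bm{\alpha},\bm{k},q)}{\bm{k}!\,(q-1)^{|\bm{k}|-|\bm{\alpha}|+1}},
\]
which rearranges to the claimed formula. The main obstacle will be the careful bookkeeping of orders of vanishing of the $\beta_{\bm{k}',\bm{k}}$ at $q=1$ combined with the inductive control on $a_\Gamma(\bm{\alpha},\bm{k}',q)$ for $\bm{k}' > \bm{k}$: without the latter, individual summands on the right-hand side can diverge even though their total has a finite limit.
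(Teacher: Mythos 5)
Your proof is correct and follows essentially the same route as the paper: substitute the Mahler expansion~\eqref{aexp} into the limit computed in Proposition~\ref{intermediatethm} and extract the coefficient of $(\bm{u}-\bm{1})^{\bm{k}}$. The downward induction on $|\bm{k}|$ that you use to isolate the $\bm{k}'=\bm{k}$ term is a welcome refinement --- the paper passes the limit inside the sum term by term without explicitly ruling out interference from the lower-order $(\bm{u}-\bm{1})$-coefficients of $(q-1)^{|\bm{k}'|}\qbinomf{\bm{u}}{\bm{k}'}$ for $\bm{k}'>\bm{k}$ --- but the underlying argument is the same.
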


\begin{proof}
Using Equations~\eqref{qbinlim} and~\eqref{aexp}, we find
\begin{eqnarray}
&& \lim_{q \to 1}{(q-1)^{|\bm{\alpha}|-1} A_{\Gamma}(\bm{\alpha},
\bm{u}, q)} \label{aexpformula} \\ 
&=& \lim_{q \to 1}{\sum_{\bm{k} \in
\N^{n(n+1)/2}}{a_{\Gamma}(\bm{\alpha}, \bm{k}, q)
(q-1)^{|\bm{\alpha}|-1}}} \;\qbinomf{\bm{u}}{\bm{k}}\\ 
&=& \lim_{q \to 1}{\sum_{\bm{k} \in
\N^{n(n+1)/2}}{\frac{a_{\Gamma}(\bm{\alpha}, \bm{k},
q)}{(q-1)^{|\bm{k}|-|\bm{\alpha}|+1}} \; (q-1)^{\bm{k}} \;\qbinomf{\bm{u}}{\bm{k}}}}\\
&=& \sum_{\bm{k} \in \N^{n(n+1)/2}}{
\left. \frac{a_{\Gamma}(\bm{\alpha}, \bm{k}, q)}{(q-1)^{|\bm{k}|-|\bm{\alpha}|+1}}
\right|_{q=1}  \frac{1}{\bm{k}!}(\bm{u}-\bm{1})^{\bm{k}} }. \nonumber 
\end{eqnarray}
The claim now follows from Proposition~\ref{intermediatethm}.
\end{proof}
Note that if $|\bm{k}| \leq |\bm{\alpha}|$, Proposition~\ref{coefderivthm}
says nothing about $a_{\Gamma}(\bm{\alpha}, \bm{k}, q)$ at $q = 1$.
This is why Theorems~\ref{zeroderivativethm} and~\ref{leadingcoeffthm}
below only give information about leading coefficients.  The first
consequence of Proposition~\ref{coefderivthm} appears when we evaluate
$A_{\Gamma}(\bm{\alpha}, 1)$.  By Equation~\eqref{aqexp},
\begin{eqnarray}
\label{A-1-fmla}
A_{\Gamma}(\bm{\alpha}, 1) &=& \sum_{\bm{k} \in
\N^{n(n+1)/2}}{a_{\Gamma}(\bm{\alpha}, \bm{k}, 1) {
\binom{\bm{g}}{\bm{k}}
}}, 
\end{eqnarray}
where
$$
\binom{\bm{g}}{\bm{k}}:=\prod_{1 \le i \le j \le
n}{\binom{g_{ij}}{k_{ij}}}.
$$

\begin{thm}
 \label{zeroderivativethm} 
The quantity $A_{\Gamma}(\bm{\alpha}, 1)$ is a polynomial in the
 variables $g_{ij}$ whose  homogeneous component of highest degree
 $A^*_{\Gamma}(\bm{\alpha}, 1)$ has total degree
 $|\bm{\alpha}|-1$ and has the form
\begin{equation}
\label{zeroderivativecoef} 
A^*_{\Gamma}(\bm{\alpha}, 1)
= \frac 1 {\bm{\alpha}!}\sum_{|\bm{k}| =
 |\bm{\alpha}|-1}  C_{\Gamma,\bm{k}}^{\bm{\alpha}}\, \bm{g}^{\bm{k}},
\end{equation}
where
$$
C_{\Gamma,\bm{k}}^{\bm{\alpha}}:=
2^{t(\bm{k})}\,G_{\bm{k}}^{\bm{\alpha}} \quad \textrm{and} \quad t(\bm{k}):=\sum_{1 \le
  i \le n}{k_{ii}}.
$$
\end{thm}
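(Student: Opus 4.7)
The plan is to start from Equation \eqref{A-1-fmla}, which expresses $A_{\Gamma}(\bm{\alpha}, 1)$ as $\sum_{\bm{k}} a_{\Gamma}(\bm{\alpha}, \bm{k}, 1) \binom{\bm{g}}{\bm{k}}$. By Theorem \ref{expthm} only finitely many of the coefficients $a_{\Gamma}(\bm{\alpha}, \bm{k}, q)$ are nonzero. Since each $\binom{g_{ij}}{k_{ij}}$ is a polynomial in $g_{ij}$ of degree $k_{ij}$ with leading term $g_{ij}^{k_{ij}}/k_{ij}!$, the product $\binom{\bm{g}}{\bm{k}}$ is a polynomial of total degree $|\bm{k}|$ with top-degree term $\bm{g}^{\bm{k}}/\bm{k}!$. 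This already establishes that $A_{\Gamma}(\bm{\alpha}, 1)$ is a polynomial in $\{g_{ij}\}$ and reduces the theorem to (i) showing that the sum is supported on $|\bm{k}| \le |\bm{\alpha}|-1$ and (ii) computing $a_{\Gamma}(\bm{\alpha}, \bm{k}, 1)$ when $|\bm{k}| = |\bm{\alpha}|-1$.

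For (i), I would revisit the derivation \eqref{aexpformula} used in the proof of Proposition \ref{coefderivthm}. Comparing coefficients of the linearly independent monomials $(\bm{u}-\bm{1})^{\bm{k}}$ on both sides of that identity shows that, for \emph{every} $\bm{k}$, the quotient $a_{\Gamma}(\bm{\alpha}, \bm{k}, q)/(q-1)^{|\bm{k}|-|\bm{\alpha}|+1}$ has a finite limit as $q \to 1$. Because $a_{\Gamma}(\bm{\alpha}, \bm{k}, q) \in \Z[q]$ (the key integrality conclusion of Theorem \ref{expthm}), this finiteness forces $(q-1)^{|\bm{k}|-|\bm{\alpha}|+1}$ to divide $a_{\Gamma}(\bm{\alpha}, \bm{k}, q)$ as a polynomial whenever the exponent is positive. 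In particular, $a_{\Gamma}(\bm{\alpha}, \bm{k}, 1) = 0$ whenever $|\bm{k}| \ge |\bm{\alpha}|$, so the total degree of $A_{\Gamma}(\bm{\alpha},1)$ is at most $|\bm{\alpha}|-1$.

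For (ii), only terms with $|\bm{k}| = |\bm{\alpha}|-1$ contribute to the homogeneous component of top degree. For these $\bm{k}$ the exponent $|\bm{k}|-|\bm{\alpha}|+1$ is zero, so the identity of Proposition \ref{coefderivthm} specializes to an explicit formula for $a_{\Gamma}(\bm{\alpha}, \bm{k}, 1)$. Applying the ``in particular'' clause of Proposition \ref{intermediatethm}, together with Proposition \ref{conngraphsprop} (which forces the only surviving $\bm{p} \in S_{\bm{k}}$ to be $\bm{p} = \bm{k}$, since any smaller $\bm{p}$ would have $|\bm{p}| < |\bm{\alpha}|-1$ and hence $G^{\bm{\alpha}}_{\bm{p}} = 0$) and the direct evaluation $c^{\bm{\alpha}}_{\bm{k}\bm{k}} = 2^{t(\bm{k})}$, one obtains $a_{\Gamma}(\bm{\alpha}, \bm{k}, 1) = (\bm{k}!/\bm{\alpha}!)\,2^{t(\bm{k})}\,G^{\bm{\alpha}}_{\bm{k}}$. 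Multiplying by the leading term $\bm{g}^{\bm{k}}/\bm{k}!$ of $\binom{\bm{g}}{\bm{k}}$ and summing over $|\bm{k}| = |\bm{\alpha}|-1$ yields precisely \eqref{zeroderivativecoef}. The only delicate step is the polynomial divisibility argument in paragraph two, which crucially uses the integrality $a_{\Gamma} \in \Z[q]$; the rest is bookkeeping built directly on the preceding propositions.
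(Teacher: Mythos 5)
Your proposal is correct and follows essentially the same route as the paper: it starts from Equation~\eqref{A-1-fmla}, deduces the vanishing $a_{\Gamma}(\bm{\alpha},\bm{k},1)=0$ for $|\bm{k}|\ge|\bm{\alpha}|$ from the finiteness of the limit in Proposition~\ref{coefderivthm} (the divisibility by $(q-1)^{|\bm{k}|-|\bm{\alpha}|+1}$ that you spell out is exactly what the paper leaves implicit), and extracts the top-degree coefficients from the $|\bm{k}|=|\bm{\alpha}|-1$ case via Proposition~\ref{conngraphsprop} and $c^{\bm{\alpha}}_{\bm{k}\bm{k}}=2^{t(\bm{k})}$, i.e.\ Equation~\eqref{alimcoef-1}. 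You have merely written out the details the paper compresses into ``combine Equation~\eqref{alimcoef-1} with Proposition~\ref{coefderivthm}.''
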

\begin{proof}
  That $A_{\Gamma}(\bm{\alpha}, 1)$ is a polynomial in the variables
  $g_{ij}$ is clear from Equation~\eqref{A-1-fmla} since that is true
  for the binomials $\binom{\bm{g}}{\bm{k}}$. The binomial
  $\binom{\bm{g}}{\bm{k}}$ has total degree $|\bm{k}|$, and by
  Proposition~\ref{coefderivthm}, $a_{\Gamma}(\bm{\alpha}, \bm{k},
  1)=0$ if $|\bm{k}|>|\bm{\alpha}|-1$.  Hence the total degree of
  $A_{\Gamma}(\bm{\alpha}, 1)$ is $|\bm{\alpha}|-1$.  To finish the
  proof it is enough to combine Equation~$\eqref{alimcoef-1}$ of
  Proposition~\ref{intermediatethm} with Proposition~\ref{coefderivthm}.
\end{proof}

As a special case of this theorem, we can consider the quiver $S_g$
from Section~\ref{introsec}, which has a single vertex (so $n=1$) and
$g$ loops, and $\bm{\alpha} = \alpha$.  In this case,
$A_{\Gamma}(\alpha, 1)$ is a polynomial in $g$ of degree $\alpha - 1$
and leading coefficient $2^{\alpha-1} G^{\alpha}_{\alpha-1}/\alpha!$
But $G^{\alpha}_{\alpha-1}$ is just the number of (spanning) trees on
$\alpha$ labeled vertices, which is $\alpha^{\alpha-2}$ by Cayley's
Theorem.  So the leading coefficient is $2^{\alpha-1}
\alpha^{\alpha-2} / \alpha!$ as claimed in the introduction.

\section{The derivatives $\frac{d^s}{dq^s} \;
A_{\Gamma}(\bm{\alpha}, q) $ at $q = 1$} 
\label{derivativesec} 

We can proceed further by differentiating Equation~\eqref{aqexp} to
obtain information about the highest order terms of the $s$-th
derivative of $A_{\Gamma}(\bm{\alpha}, q)$ evaluated at $q = 1$.  This
is the subject of the next theorem.  Note that
$G_{\bm{p}}^{\bm{\alpha}}$ was defined prior to
Theorem~\ref{expformula}, $S_{\bm{k}}$ was defined in
Equation~\eqref{sdef}, and $S(\ell, k)$ is the Stirling number of the
second kind discussed in Appendix~\ref{appendixsec}.  Also, if
$\bm{k}, \bm{\ell} \in \N^{n(n+1)/2}$, we write $\bm{k} \le \bm{\ell}$
if $k_{ij} \le \ell_{ij}$ for all $1 \le i \le j \le n$. To simplify
the notation we let
$$
A_{\Gamma,s}(\bm{\alpha},q):= \frac{d^s}{dq^s} \;
     A_{\Gamma}(\bm{\alpha}, q).
$$
\begin{thm}
 \label{leadingcoeffthm} 
 The quantity $A_{\Gamma,s}(\bm{\alpha},1)$ is a polynomial in the
 variables $g_{ij}$ whose homogeneous component of highest degree
 $A_{\Gamma,s}^*(\bm{\alpha},1)$ has total degree $s+|\bm{\alpha}|-1$
 and is given by
\begin{equation}
A^*_{\Gamma,s}(\bm{\alpha}, 1)
= \frac 1 {\bm{\alpha}!}\sum_{|\bm{\ell}| =
 s+ |\bm{\alpha}|-1}  C_{\Gamma,s,\bm{\ell}}^{\bm{\alpha}}\, \bm{g}^{\bm{\ell}},
\end{equation}
where
$$
C_{\Gamma,s,\bm{\ell}}^{\bm{\alpha}}:=
\frac{s!}{\bm{\ell}!} \sum_{\genfrac{}{}{0pt}{1}{\bm{k}
    \in \N^{n(n+1)/2}}{\bm{k} \le \bm{\ell}}}{S(\bm{k},\bm{\ell})\,\bm{k}! 
\sum_{\bm{p}
    \in S_{\bm{k}}}{c_{\bm{k}\bm{p}}^\alpha\,G_{\bm{p}}^{\bm{\alpha}}}}
$$
and
$$
S(\bm{\ell},\bm{k}):= \prod_{1 \le
i \le j \le n}{S(\ell_{ij}, k_{ij})}.
$$
\end{thm}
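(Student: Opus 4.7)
The plan is to differentiate the expansion~\eqref{aqexp} $s$ times with respect to $q$, evaluate at $q = 1$, and read off the terms of top total degree in the $g_{ij}$. Applying the Leibniz rule to~\eqref{aqexp} yields
\[
A_{\Gamma,s}(\bm{\alpha},1) \;=\; \sum_{\bm{k} \in \N^{n(n+1)/2}}\sum_{r=0}^{s}\binom{s}{r}\,\left.\frac{d^{r}}{dq^{r}}a_{\Gamma}(\bm{\alpha},\bm{k},q)\right|_{q=1}\,\left.\frac{d^{s-r}}{dq^{s-r}}\qbinom{\bm{g}}{\bm{k}}\right|_{q=1}.
\]
Each $q$-binomial derivative on the right is a polynomial in the $g_{ij}$, so the polynomiality claim is immediate.

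The next step is a degree count. From Appendix~\ref{appendixsec}, the Taylor expansion of $\qbinom{g}{k}$ around $q=1$ has the form $\qbinom{g}{k}=\sum_{m\ge 0} c_{m,k}(g)(q-1)^{m}$, where $c_{m,k}(g)\in\Q[g]$ has degree $k+m$ with leading coefficient $S(k+m,k)/(k+m)!$; equivalently,
\[
\left.\frac{d^{m}}{dq^{m}}\qbinom{g}{k}\right|_{q=1}\;=\;\frac{m!\,S(k+m,k)}{(k+m)!}\,g^{k+m}\;+\;(\textrm{lower degree in }g).
\]
Leibniz-expanding $\qbinom{\bm{g}}{\bm{k}}=\prod_{i\le j}\qbinom{g_{ij}}{k_{ij}}$ shows that the $(s-r)$-th $q$-derivative of $\qbinom{\bm{g}}{\bm{k}}$ at $q=1$ has total $g$-degree at most $|\bm{k}|+(s-r)$. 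By Proposition~\ref{coefderivthm}, the derivative of $a_{\Gamma}$ vanishes unless $r\ge|\bm{k}|-|\bm{\alpha}|+1$, so any nonzero summand in the Leibniz expansion has total $g$-degree at most $s+|\bm{\alpha}|-1$, with equality forced precisely when $r=|\bm{k}|-|\bm{\alpha}|+1$.

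To isolate the leading homogeneous component, fix $\bm{\ell}$ with $|\bm{\ell}|=s+|\bm{\alpha}|-1$ and compute the coefficient of $\bm{g}^{\bm{\ell}}$. Only pairs $(\bm{k},r)$ with $\bm{k}\le\bm{\ell}$ and $r=|\bm{k}|-|\bm{\alpha}|+1$ can contribute. By Proposition~\ref{coefderivthm}, $\left.\tfrac{d^{r}}{dq^{r}}a_{\Gamma}(\bm{\alpha},\bm{k},q)\right|_{q=1}=r!\cdot\tfrac{\bm{k}!}{\bm{\alpha}!}\sum_{\bm{p}\in S_{\bm{k}}}c^{\bm{\alpha}}_{\bm{k}\bm{p}}G^{\bm{\alpha}}_{\bm{p}}$. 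On the other hand, setting $\ell_{ij}=k_{ij}+r_{ij}$ in the Leibniz expansion of $\left.\tfrac{d^{s-r}}{dq^{s-r}}\qbinom{\bm{g}}{\bm{k}}\right|_{q=1}$ and extracting the leading term in each factor via the Stirling identity above shows that the coefficient of $\bm{g}^{\bm{\ell}}$ in this second factor equals $\tfrac{(s-r)!}{\bm{\ell}!}\prod_{i\le j}S(\ell_{ij},k_{ij})$. Combining, the factors $\binom{s}{r}\,r!(s-r)!$ collapse to $s!$, and the claimed formula for $C^{\bm{\alpha}}_{\Gamma,s,\bm{\ell}}$ emerges after dividing by $\bm{\alpha}!$. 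The sum over $\bm{k}\le\bm{\ell}$ needs no further restriction because Proposition~\ref{conngraphsprop} annihilates every term with $|\bm{k}|<|\bm{\alpha}|-1$.

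The principal obstacle is the Stirling-number identity for the leading $g$-term of $\left.\tfrac{d^{m}}{dq^{m}}\qbinom{g}{k}\right|_{q=1}$. This reduces, by writing $\qbinom{g}{k}=\prod_{i=0}^{k-1}[g-i]_{q}/[k]_{q}!$ and expanding $[g]_{q}=\sum_{\ell\ge 1}(q-1)^{\ell-1}\binom{g}{\ell}$, to the classical identity $k!\,S(N,k)/N!=\sum_{\ell_{1}+\cdots+\ell_{k}=N,\,\ell_{i}\ge 1}\prod(1/\ell_{i}!)$, which is drawn from the Appendix. Once this is in hand, everything else is bookkeeping that reconciles the single index $r$ with the multi-index $(r_{ij})$ in the Leibniz expansion and matches monomial powers with the claimed form.
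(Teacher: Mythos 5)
Your proposal is correct and follows essentially the same route as the paper: differentiate the Mahler-type expansion~\eqref{aqexp} via the Leibniz rule, bound the total degree using the vanishing from Proposition~\ref{coefderivthm} together with the degree count for $q$-binomial derivatives (Theorem~\ref{qbinder}), and then extract the coefficient of $\bm{g}^{\bm{\ell}}$ from the unique contributing value $r=|\bm{k}|-|\bm{\alpha}|+1$, with $\binom{s}{r}r!(s-r)!=s!$ collapsing the factorials exactly as in Equation~\eqref{leadingcoeffpartial}. The only cosmetic difference is that you re-derive the Stirling-number leading coefficient of $\left.\frac{d^{m}}{dq^{m}}\qbinom{g}{k}\right|_{q=1}$ by expanding $q$-integers, whereas the paper simply quotes Theorem~\ref{qbinder}; this changes nothing of substance.
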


\begin{proof}
Differentiating
\[
A_{\Gamma}(\bm{\alpha}, q) = \sum_{\bm{k} \in
\N^{n(n+1)/2}}{a_{\Gamma}(\bm{\alpha}, \bm{k}, q) 
\qbinom{\bm{g}}{\bm{k}}}
\]
$s$ times with respect to $q$, we obtain
\begin{eqnarray}
&& A_{\Gamma,s}(\bm{\alpha},1) \label{sthderiv} \\ 
&=& \sum_{\bm{k} \in \N^{n(n+1)/2}}{\sum_{r=0}^s{\binom{s}{r}
\left. \left( \frac{d^r}{dq^r} \; a_{\Gamma}(\bm{\alpha}, \bm{k}, q)
\right) \left( \frac{d^{s-r}}{dq^{s-r}} \;
\qbinom{\bm{g}}{\bm{k}} \right) \right|_{q=1}}}. \nonumber 
\end{eqnarray}
By the Product Rule and Theorem~\ref{qbinder}, $\left. \left(
\frac{d^{s-r}}{dq^{s-r}} \;
\qbinom{\bm{g}}{\bm{k}} \right) \right|_{q=1}$ is a polynomial in
the variables $g_{ij}$ ($1 \le i \le j \le n$) of total degree
$|\bm{k}| + s - r$, and if $\bm{t} \in \N^{n(n+1)/2}$ with $|\bm{t}| =
s-r$, then the coefficient of $\bm{g}^{\bm{k} + \bm{t}}$ in
$\left. \left( \frac{d^{s-r}}{dq^{s-r}} \;
\qbinom{\bm{g}}{\bm{k}} \right) \right|_{q=1}$ is 
\begin{eqnarray}
\label{qbinproduct}
 \binom{s-r}{t_{11}, \dots, t_{ij}, \dots, t_{nn}} \prod_{1 \le i \le
 j \le n}{\frac{t_{ij}! S(k_{ij} + t_{ij}, k_{ij})}{(k_{ij}+t_{ij})!}}
 = \frac{(s-r)!}{(\bm{k}+\bm{t})!} \prod_{1 \le i \le j \le
 n}{S(k_{ij}+t_{ij}, k_{ij})}. 
\end{eqnarray}
By Proposition~\ref{coefderivthm}, if $r < |\bm{k}|-|\bm{\alpha}|+1$, then
$\left. \left( \frac{d^r}{dq^r} \; a_{\Gamma}(\bm{\alpha}, \bm{k}, q)
  \right) \right|_{q=1} = 0$.  Thus the total degree of each leading
term of $ A_{\Gamma,s}(\bm{\alpha},1)$ is at most
$|\bm{k}|+s-|\bm{k}|-|\bm{\alpha}|+1 = s+|\bm{\alpha}|-1$.  We now
determine the coefficient of $\bm{g}^{\bm{\ell}}$ in
$A_{\Gamma,s}(\bm{\alpha},1)$ when $\bm{\ell} \in \N^{n(n+1)/2}$
satisfies $|\bm{\ell}| = s + |\bm{\alpha}| - 1$.  A summand in
Equation~\eqref{sthderiv} has a leading term of $\bm{g}^{\bm{\ell}}$
when $\bm{k} \le \bm{\ell}$ and $r = |\bm{k}|-|\bm{\alpha}|+1 = s -
|\bm{\ell}| + |\bm{k}|$.  Given such a $\bm{k}$, let $\bm{t} =
\bm{\ell} - \bm{k}$.  Then, using Proposition~\ref{coefderivthm} and
Equation~\eqref{qbinproduct}, we find that
\begin{eqnarray}  
&& \left[ \textrm{the coefficient of } \bm{g}^{\bm{\ell}} \textrm{ in
} \right] \sum_{r=0}^s{\binom{s}{r} \left. \left( \frac{d^r}{dq^r} \;
a_{\Gamma}(\bm{\alpha}, \bm{k}, q) \right) \left(
\frac{d^{s-r}}{dq^{s-r}} \; 
\qbinom{\bm{g}}{\bm{k}}
 \right) \right|_{q=1}}
\label{leadingcoeffpartial} \\ 
&=& \binom{s}{|\bm{k}|-|\bm{\alpha}|+1} \frac{(|\bm{k}|-|\bm{\alpha}|+1)! \bm{k}!}{\bm{\alpha}!}
 \sum_{\bm{p} \in S_{\bm{k}}}
 c^{\bm{\alpha}}_{\bm{k}\bm{p}} \;
{G_{\bm{p}}^{\bm{\alpha}}}\;
\frac{(s-|\bm{k}|-|\bm{\alpha}|+1)!}{\bm{\ell}!}
S(\bm{\ell}, \bm{k}) \nonumber \\ 
&=& \frac{s!\bm{k}!}{\bm{\alpha}!\bm{\ell}!}
S(\bm{\ell}, \bm{k})
\sum_{\bm{p} \in
S_{\bm{k}}} 
 c^{\bm{\alpha}}_{\bm{k}\bm{p}} \;
{G_{\bm{p}}^{\bm{\alpha}}}. \nonumber 
\end{eqnarray}
Summing over all $\bm{k} \le \bm{\ell}$ completes the proof.
\end{proof}

As a special case of this theorem, we can consider the quiver $S_g$
from Section~\ref{introsec}.  In this case,
$A_{\Gamma,s}(\bm{\alpha},1)$ is a polynomial in $g$ of degree $s +
\alpha - 1$ and leading coefficient
\[
\frac{s!}{\alpha! (s+\alpha-1)!} \sum_{k =
  \alpha-1}^{s+\alpha-1}{S(s+\alpha-1, k)\,k!
  \sum_{p=\alpha-1}^k{G^{\alpha}_p \sum_{j=0}^{\infty}{ \binom{p}{j}
      \binom{\alpha}{k-p-j} 2^{p-j}}}}.
\] 

\appendix
\section{Derivatives of $q$-binomial coefficients}
\label{appendixsec}

The goal of this appendix is to prove a theorem about derivatives of
$q$-binomial coefficients.  The $q$-binomial coefficient
$\qbinom{b}{k}$ is a polynomial in $q$ of degree $k(b-k)$ defined for
non-negative integers $b$ and $k$ by the formula 
\[
  \qbinom{b}{k} = \frac{(q^b-1)(q^{b-1}-1) \cdots (q^{b-k+1}-1)}
                       {(q^k-1)(q^{k-1}-1) \cdots (q-1)}
                = \prod_{i=1}^k{
                    \frac{\sum_{j=0}^{b-i}{q^j}}
                         {\sum_{j=0}^{i-1}{q^j}}
                  }.
\]
The result in question involves the Stirling number of the second kind
$S(\ell, k)$, which counts the number of partitions of an $\ell$-set
into $k$ blocks.  While there are many formulas involving Stirling
numbers of the second kind, we will only need to know that 
\begin{eqnarray}  
\label{stirlinggf}
  \sum_{\ell = k}^{\infty}{S(\ell,k) \; \frac{x^{\ell}}{\ell!}} &=&
  \frac{(e^x-1)^k}{k!} 
\end{eqnarray}
for all $k \ge 0$.  Now we can state the theorem.

\begin{thm} \label{qbinder}
Fix non-negative integers $k$ and $t$.  Then there is a polynomial
$P_{k,t}(b)$ of degree $k+t$ with leading coefficient
$\frac{t!}{(k+t)!} \cdot S(k+t, k)$ so that for all non-negative
integers $b$, 
\[
  P_{k,t}(b) = \left. \left( \frac{d^t}{dq^t} \; \qbinom{b}{k} \right) \right|_{q=1}.
\]
\end{thm}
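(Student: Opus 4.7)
The plan is to work in the variable $x := \log q$, since in this variable the products defining $\qbinom{b}{k}$ split cleanly into exponential series whose coefficients are transparently polynomial in $b$. Setting $f(q) := \qbinom{b}{k}$ and $g(x) := f(e^x)$, I would first write
\[
g(x)\;=\;\prod_{i=1}^{k}\frac{e^{(b-i+1)x}-1}{e^{ix}-1}\;=\;\binom{b}{k}\prod_{i=1}^{k}\frac{\phi((b-i+1)x)}{\phi(ix)},\qquad \phi(u):=\frac{e^u-1}{u}=\sum_{j\ge 0}\frac{u^j}{(j+1)!}.
\]
The denominator $\prod_i \phi(ix)$ has constant term $1$ and is independent of $b$, while each factor $\phi((b-i+1)x)$ is a power series in $x$ whose coefficient at $x^j$ is a polynomial in $b$ of degree $j$; consequently the Taylor coefficient $[x^\ell]g(x)$ is a polynomial in $b$ of degree at most $k+\ell$.

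Second, I would translate between derivatives of $f$ at $q=1$ and derivatives of $g$ at $x=0$. Expanding $f(q)=\sum_n f^{(n)}(1)(q-1)^n/n!$, substituting $q=e^x$, and invoking the Stirling identity $(e^x-1)^n=n!\sum_\ell S(\ell,n)x^\ell/\ell!$ from Equation~\eqref{stirlinggf} yields the triangular relation
\[
g^{(\ell)}(0)\;=\;\sum_{n=0}^{\ell}f^{(n)}(1)\,S(\ell,n)\;=\;\sum_{n=0}^{\ell}P_{k,n}(b)\,S(\ell,n).
\]
Because $S(\ell,\ell)=1$, this can be inverted recursively to show that each $P_{k,t}(b)$ really is a polynomial in $b$ of degree at most $k+t$. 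To extract the leading coefficient, I would observe that the top-in-$b$ part of $[x^j]\phi((b-i+1)x)$ is $b^j/(j+1)!$; multiplying across $i=1,\dots,k$ and using $[b^k]\binom{b}{k}=1/k!$ together with the fact that only the constant term of $\prod_i\phi(ix)$ survives in the highest $b$-power, one gets
\[
[b^{k+\ell}]\,g^{(\ell)}(0)\;=\;\frac{\ell!}{k!}\,[y^\ell]\phi(y)^k.
\]
Because each $P_{k,n}$ with $n<\ell$ contributes $b$-degree at most $k+n<k+\ell$, only the $n=\ell$ term of the triangular relation matters at degree $b^{k+\ell}$, so this same quantity equals the leading coefficient of $P_{k,\ell}(b)$.

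Finally, $\phi(y)^k=(e^y-1)^k/y^k$, so applying Equation~\eqref{stirlinggf} once more gives
\[
[y^t]\phi(y)^k\;=\;[y^{k+t}](e^y-1)^k\;=\;\frac{k!\,S(k+t,k)}{(k+t)!},
\]
which, plugged into the formula above with $\ell=t$, yields the desired leading coefficient $t!\,S(k+t,k)/(k+t)!$; for $k\ge 1$ this is nonzero, pinning down the degree exactly. The main obstacle, modest as it is, is the bookkeeping around the $b$-degree — specifically verifying that extracting the top-$b$ coefficient commutes with dividing by $\prod_i\phi(ix)$, which works only because this denominator is $b$-free with constant term $1$.
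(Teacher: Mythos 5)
Your proof is correct, but it routes around the paper's key technical lemma in a genuinely different way. The paper also factors $\qbinom{b}{k}$ into the $k$ quotients $\bigl(\sum_{j=0}^{b-i}q^j\bigr)/\bigl(\sum_{j=0}^{i-1}q^j\bigr)$, but it stays in the variable $q$: Lemma~\ref{qbinlem} shows, by induction on the order of differentiation via the quotient rule, that the $m$-th derivative of each factor at $q=1$ is a polynomial in $b$ of degree $m+1$ with leading coefficient $\frac{1}{i(m+1)}$; multiplying the resulting Taylor series at $q=1$ and recognizing $\prod_{i=1}^{k}\frac{1}{i}\cdot\frac{e^{q-1}-1}{q-1}=\frac{1}{k!}\bigl(\frac{e^{q-1}-1}{q-1}\bigr)^{k}$ among the leading-in-$b$ terms then produces the Stirling number via Equation~\eqref{stirlinggf}. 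You instead substitute $q=e^{x}$, which makes each factor explicitly $\frac{b-i+1}{i}\cdot\frac{\phi((b-i+1)x)}{\phi(ix)}$ with $\phi(u)=(e^{u}-1)/u$, so polynomiality in $b$ of the $x$-Taylor coefficients is immediate and no inductive quotient-rule lemma is needed; the price is the conversion between $x$-derivatives at $0$ and $q$-derivatives at $1$, which you handle correctly with the triangular relation $g^{(\ell)}(0)=\sum_{n\le\ell}P_{k,n}(b)\,S(\ell,n)$ and the invertibility coming from $S(\ell,\ell)=1$. Both arguments ultimately read off the leading coefficient from $\frac{1}{k!}\phi(y)^{k}$ and Equation~\eqref{stirlinggf}, and your isolation of the top $b$-degree (only the constant term of the $b$-free denominator $\prod_i\phi(ix)$ contributes) is sound. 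Your parenthetical that nonvanishing of the leading coefficient requires $k\ge 1$ is a fair observation: for $k=0$ and $t\ge 1$ the polynomial is identically zero, so the stated degree $k+t$ is attained only in a degenerate sense, a point the paper's statement also leaves implicit.
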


We need one lemma before proving Theorem~\ref{qbinder}.

\begin{lem} \label{qbinlem}
Fix non-negative integers $i$ and $m$.  Then there is a polynomial
$p_{i,m}(b)$ of degree $m+1$ with leading coefficient
$\frac{1}{i(m+1)}$ so that for all positive integers $b \ge i-1$, 
\[
p_{i,m}(b) = \left. \left( \frac{d^m}{dq^m} \;
\frac{\sum_{j=0}^{b-i}{q^j}}{\sum_{j=0}^{i-1}{q^j}} \right)
\right|_{q=1}. 
\]
\end{lem}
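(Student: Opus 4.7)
The derivatives are taken with respect to $q$ at $q=1$, so the natural change of variable is $u:=q-1$. Under this substitution,
\[
\left.\frac{d^m}{dq^m}F(q)\right|_{q=1}\;=\;m!\cdot[u^m]\,F(1+u),
\]
and the problem reduces to computing the $u^m$-coefficient of an explicit ratio of two power series.

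First I would expand numerator and denominator using $(1+u)^j=\sum_k\binom{j}{k}u^k$ together with the hockey-stick identity $\sum_{j=k}^{N}\binom{j}{k}=\binom{N+1}{k+1}$:
\[
\sum_{j=0}^{b-i}(1+u)^j \;=\; \sum_{k\ge 0}\binom{b-i+1}{k+1}\,u^k,\qquad \sum_{j=0}^{i-1}(1+u)^j \;=\; \sum_{k\ge 0}\binom{i}{k+1}\,u^k.
\]
(When $b=i-1$ the numerator vanishes, consistent with $\binom{0}{k+1}=0$.) The denominator has constant term $\binom{i}{1}=i\ne 0$, so it is a unit in $\Q[[u]]$; I would write $1/\sum_{j=0}^{i-1}(1+u)^j=\sum_{\ell\ge 0}c_\ell(i)\,u^\ell$, noting that $c_0(i)=1/i$ and that the other $c_\ell(i)\in\Q$ depend only on $i$.

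Next, extracting the $u^m$-coefficient of the product gives
\[
[u^m]\,\frac{\sum_{j=0}^{b-i}(1+u)^j}{\sum_{j=0}^{i-1}(1+u)^j} \;=\; \sum_{k=0}^{m}c_{m-k}(i)\binom{b-i+1}{k+1}.
\]
This is a \emph{finite} $\Q$-linear combination of the polynomials $\binom{b-i+1}{k+1}$, each of degree $k+1$ in $b$, so the whole expression is a polynomial in $b$ of degree at most $m+1$. Only the term $k=m$ contributes to degree $m+1$, with coefficient $c_0(i)\cdot\frac{1}{(m+1)!}=\frac{1}{i(m+1)!}$ in front of $b^{m+1}$. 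Multiplying through by $m!$ to undo the Taylor identification yields the desired polynomial $p_{i,m}(b)$ of degree exactly $m+1$ with leading coefficient $\frac{m!}{i(m+1)!}=\frac{1}{i(m+1)}$.

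There is no genuine obstacle; this is a direct power-series manipulation. The only bookkeeping to watch is (i) that finitely many terms contribute to each fixed $u^m$-coefficient, so one really obtains a polynomial in $b$ rather than a formal rational expression, and (ii) that the top-degree contribution comes solely from the constant term $c_0(i)=1/i$ of the inverted denominator, which is what forces the clean leading coefficient $1/(i(m+1))$.
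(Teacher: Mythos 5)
Your proof is correct, and it takes a genuinely different route from the paper's. The paper proves the lemma by induction on $m$: it introduces an auxiliary two-parameter family $f_{m,r}(b,i,q) = \frac{d^m}{dq^m}\bigl(\frac{d^r}{dq^r}\sum_{j=0}^{b-i}q^j\bigr)/\bigl(\sum_{j=0}^{i-1}q^j\bigr)$, computes the base case $m=0$ directly (obtaining $f_{0,r}(b,i,1)=\frac{r!}{i}\binom{b-i+1}{r+1}$, essentially the same binomials that appear in your expansion), and then inducts via the quotient rule written as $\frac{d}{dq}(g/h)=g'/h-(g/h)(h'/h)$, showing $f_{m,r}(b,i,1)$ has degree $m+r+1$ and leading coefficient $\frac{1}{i(m+r+1)}$. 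Your argument replaces the induction by a single Taylor-coefficient extraction in $u=q-1$: the hockey-stick identity gives both series explicitly, the denominator is a unit in $\Q[[u]]$ with constant term $i$, and the degree-$(m+1)$ term visibly comes only from $c_0(i)=1/i$ times $\binom{b-i+1}{m+1}$. This is shorter, yields an explicit formula $p_{i,m}(b)=m!\sum_{k=0}^{m}c_{m-k}(i)\binom{b-i+1}{k+1}$, and is in fact more consonant with the rest of the appendix, since the paper's proof of Theorem~\ref{qbinder} already switches to exactly this Taylor-expansion-at-$q=1$ viewpoint; the paper's induction buys only the avoidance of formal power-series inversion, at the cost of carrying the extra parameter $r$. (Both your argument and the paper's implicitly require $i\ge 1$ so the denominator is nonzero, which is how the lemma is actually used.)
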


\begin{proof}
Let $r$ be a non-negative integer and define the function
\[
 f_{m,r}(b,i,q) = \frac{d^m}{dq^m} \frac{\frac{d^r}{dq^r}
 \sum_{j=0}^{b-i}}{\sum_{j=0}^{i-1}{q^j}}. 
\]
We want to show that $f_{m,0}(b,i,1)$ is a polynomial of degree $m+1$
with leading coefficent $\frac{1}{i(m+1)}$, but by induction on $m$ we
will prove the stronger statement that $f_{m,r}(b,i,1)$ is a
polynomial of degree $m+r+1$ with leading coefficent
$\frac{1}{(m+r+1)}$.  When $m = 0$, we find that 
\begin{eqnarray*}
f_{0,r}(b, i, q) &=& \frac{\frac{d^r}{dq^r} \;
\sum_{j=0}^{b-i}{q^j}}{\sum_{j=0}^{i-1}{q^j}} =
\frac{\sum_{j=r}^{b-i}{j(j-1) \cdots (j-r+1)
q^{j-r}}}{\sum_{j=0}^{i-1}{q^j}} \\ 
f_{0,r}(b, i, 1) &=& \frac{\sum_{j=r}^{b-i}{j(j-1) \cdots (j-r+1)}}{i}
= \frac{r!}{i} \binom{b-i+1}{r+1}. 
\end{eqnarray*}
Thus $f_{0,r}(b,i,1)$ is a polynomial in $b$ of degree $r+1$ and
leading coefficient $\frac{1}{i(r+1)}$, proving the base case for the
induction.  When $m \ge 1$, writing the Quotient Rule in the form 
\[
\frac{d}{dq} \left( \frac{g(q)}{h(q)} \right) = \frac{g'(q)}{h(q)} -
\frac{g(q)}{h(q)} \cdot \frac{h'(q)}{h(q)}, 
\]
we see that
\[
 f_{1,r}(b,i,q) = f_{0,r+1}(b,i,q) - f_{0,r}(b,i,q) \cdot
 \frac{\sum_{j=0}^{i-1}{jq^{j-1}}}{\sum_{j=0}^{i-1}{q^j}}. 
\]
Differentiating both sides $m-1$ times and evaluating at $q = 1$, we obtain
\[
f_{m,r}(b,i,1) = f_{m-1,r+1}(b,i,1) - \sum_{s=0}^{m-1}{\binom{m-1}{s}
f_{s,r}(b,i,1) \left( \frac{d^{m-1-s}}{dq^{m-1-s}} \;
\frac{\sum_{j=0}^{i-1}{jq^{j-1}}}{\sum_{j=0}^{i-1}{q^j}} \right)}. 
\]
The inductive hypothesis shows that $f_{m-1,r+1}(b,i,1)$ is a
polynomial in $b$ of degree $m+r+1$ and leading coefficient
$\frac{1}{i(m+r+1)}$, and every other term on the right-hand side of
the equation is a polynomial in $b$ of degree less than $m+r+1$.  This
proves that $f_{m,r}(b,i,1)$ is a polynomial in $b$ of degree $m+r+1$
and leading coefficient $\frac{1}{i(m+r+1)}$. 
\end{proof}

Now we can prove Theorem~\ref{qbinder}.

\begin{proof}
We link derivatives of $\qbinom{b}{k}$ with the derivatives computed
in Lemma~\ref{qbinlem} via Taylor series expansions at $q = 1$: 
\begin{eqnarray*} 
\sum_{t=0}^{\infty}{\left. \left( \frac{d^t}{dq^t} \qbinom{b}{k}
\right) \right|_{q=1} \; \frac{(q-1)^t}{t!}}  
&=& \qbinom{b}{k} \\
&=& \prod_{i=1}^k{\frac{\sum_{j=0}^{b-i}{q^j}}{\sum_{j=0}^{i-1}}} \\
&=& \prod_{i=1}^k{\left( \sum_{m=0}^{\infty}{\left. \left(
\frac{d^m}{dq^m} \;
\frac{\sum_{j=0}^{b-i}{q^j}}{\sum_{j=0}^{i-1}{q^j}} \right)
\right|_{q=1} \frac{(q-1)^m}{m!}} \right)}. 
\end{eqnarray*}
By Lemma~\ref{qbinlem}, for all integers $b \ge k-1$, 
\[
 \prod_{i=1}^k{\left( \sum_{m=0}^{\infty}{\left. \left(
 \frac{d^m}{dq^m} \;
 \frac{\sum_{j=0}^{b-i}{q^j}}{\sum_{j=0}^{i-1}{q^j}} \right)
 \right|_{q=1} \frac{(q-1)^m}{m!}}\right)} = \prod_{i=1}^k{\left(
 \sum_{m=0}^{\infty}{p_{i,m}(b) \; \frac{(q-1)^m}{m!}}\right)}, 
\]
where $p_{i,m}(b)$ is the polynomial from Lemma~\ref{qbinlem}.  In
fact, we claim this equation holds for $0 \le b < k-1$ as well.  Fix
$b$ with $0 \le b < k-1$.  The left-hand side is certainly 0, and by
Lemma~\ref{qbinlem}, $p_{b+1,m}(b) = 0$ for all $m$, so the right-hand
side is 0 as well.  This shows that for all non-negative integers $b$,  
\[ 
\sum_{t=0}^{\infty}{\left. \left( \frac{d^t}{dq^t} \qbinom{b}{k}
\right) \right|_{q=1} \frac{(q-1)^t}{t!}}  
= \prod_{i=1}^k{\left( \sum_{m=0}^{\infty}{p_{i,m}(b)
\frac{(q-1)^m}{m!}}\right)}. 
\]
Thus there is a polynomial $P_{k,t}(b)$ in $b$ of degree $k+t$ so that
$P_{k,t}(b) = \left. \left( \frac{d^t}{dq^t} \qbinom{b}{k} \right)
\right|_{q=1}$ for all non-negative integers $b$, and its leading
coefficient is  
\begin{eqnarray*}
&& \left[ \textrm{coefficient of $\frac{(q-1)^t}{t!}$} \right]
\prod_{i=1}^k{\left( \sum_{m=0}^{\infty}{\frac{1}{i(m+1)}
\frac{(q-1)^m}{m!}}\right)} \\ 
&=& \left[ \textrm{coefficient of $\frac{(q-1)^t}{t!}$} \right]
\prod_{i=1}^k{\frac{1}{i} \left( \frac{e^{q-1}-1}{q-1} \right)} \\ 
&=& \left[ \textrm{coefficient of $\frac{(q-1)^t}{t!}$} \right]
\frac{1}{k!} \left( \frac{e^{q-1}-1}{q-1} \right)^k \\ 
&=& \left[ \textrm{coefficient of $\frac{(q-1)^t}{t!}$} \right]
\sum_{m=0}^{\infty}{\frac{m!}{(k+m)!} \; S(k+m, k) \;
\frac{(q-1)^m}{m!}} \\ 
&=& \frac{t!}{(k+t)!} \; S(k+t, k),
\end{eqnarray*}
where the second-to-last line comes from the generating function in 
Equation~\eqref{stirlinggf}. 
\end{proof}

\bibliographystyle{amsplain}

\providecommand{\bysame}{\leavevmode\hbox to3em{\hrulefill}\thinspace}
\providecommand{\MR}{\relax\ifhmode\unskip\space\fi MR }
\providecommand{\MRhref}[2]{%
  \href{http://www.ams.org/mathscinet-getitem?mr=#1}{#2}
}
\providecommand{\href}[2]{#2}

\end{document}